\newcommand{\CM}{Cohen-Macaulay}
\newcommand{\wrt}{with respect to}
\newcommand{\bF}{\mathfrak{b} }
\newcommand{\PP}{\mathcal{P} }
\newcommand{\n}{\mathfrak{n} }
\newcommand{\m}{\mathfrak{m} }
\newcommand{\q}{\mathfrak{q} }
\newcommand{\rt}{\rightarrow}
\newcommand{\ov}{\overline}
\newcommand{\rank}{\operatorname{rank}}
\newcommand{\depth}{\operatorname{depth}}
\newcommand{\mult}{\operatorname{mult}}
\newcommand{\add}{\operatorname{add}}
\newcommand{\Ass}{\operatorname{Ass}}
\newcommand{\Aut}{\operatorname{Aut}}
\newcommand{\Hom}{\operatorname{Hom}}
\newcommand{\Ext}{\operatorname{Ext}}
\theoremstyle{plain}
\newtheorem{theorem}{Theorem}[section]
\newtheorem{corollary}[theorem]{Corollary}
\newtheorem{lemma}[theorem]{Lemma}
\newtheorem{proposition}[theorem]{Proposition}
\theoremstyle{definition}
\newtheorem{remark}[theorem]{Remark}
\theoremstyle{remark}
\begin{document}

\title[Finite representation type]{On two dimensional mixed characteristic rings of finite Cohen Macaulay  type}
\author{Tony~J.~Puthenpurakal}
\date{\today}
\address{Department of Mathematics, IIT Bombay, Powai, Mumbai 400 076}

\email{tputhen@math.iitb.ac.in}
\subjclass{Primary 13C14; Secondary 13H10, 14B05 }
\keywords{Finite representation type, invariant rings}
 \begin{abstract}
In this paper we give a bountiful number of examples of two dimensional mixed characteristic rings of finite Cohen Macaulay  type. For a large sub-class of these examples we give a complete description of its indecomposable maximal \CM \ modules and we also compute its AR-quiver.
\end{abstract}
 \maketitle
\section{introduction}
Let $(A,\m)$ be a Henselian \CM  \ local ring of dimension $d \geq 0$. As $A$ is Henselian the category of finitely generated $A$-modules is Krull-Schmidt, i.e., any finitely generated $A$-module is uniquely a finite direct sum of indecomposable $A$-modules. We say $A$ has finite representaion type if $A$ has only finitely many indecomposable maximal \CM \ modules. 

There has been a lot of work towards understanding \CM \ rings of finite representation type. See \cite{Yoshino} for a very readable account of this work. 
We should note that although the basic theory is developed in general, most of the examples considered are equicharacteristic, i.e., $A$ contains a field. 
See \cite{GL} for examples of one-dimensional hypersurfaces of mixed characteristic rings of finite representation type.

Let $T = k[[x_1,x_2]]$ and let $G$ be a finite subgroup of $GL_2(k)$ acting linearly on $T$. In a fundamental work \cite{Aus-1}, Auslander proved that the ring of invariants $A = T^G$ is of finite representation type. When $G$ has no psuedo-reflections, he gave a description of all indecomposable maximal \CM \ $A$-modules and  constructed all AR-sequences of $A$. Furthermore he showed that the AR-quiver of $A$ is isomorphic to the McKay graph of $G$.

In this paper we construct examples of two dimensional mixed characteristic rings of finite Cohen Macaulay  type. Our examples also arise as invariant rings but with a twist.  Let $(V,\pi)$ be a complete DVR of characteristic zero having  residue field $k = V/\pi$, an algebraically closed field of characteristic $p > 0$. Let $G$ be a finite subgroup of $GL_2(V)$. We assume that $p \nmid |G|$. So $|G|$ is a unit in $V$. Let $x_1, x_2 $ be a basis of $V^2$ on which $G$ naturally acts. Then the action of $G$ can be extended to the ring $S = V[[x_1,x_2]]$.  Set $\m = (\pi, x_1, x_2)$, the maximal ideal of $S$. Let $S^G$ be the ring of invariants of $S$ \wrt \ $G$. Let $f \in (x_1,x_2)^2\cap S^G$. We also assume $f \notin \pi S$. Set $R = S/(\pi - f)$. Notice $\pi - f \in \m\setminus \m^2$. So $R$ is a regular local ring of dimension two. Note 
$G$ acts on $R$. It can be shown that $R^G$ has finite representation type, see \ref{herzog}. By \ref{mod-reg} we get that $R^G \cong S^G/(\pi -f)$. It can be easily shown that $R$ is of mixed characteristic. In Proposition \ref{construct-groups} we show that there are bountiful number of finite subgroups $G$ of 
$GL_2(V)$ with $p \nmid |G|$. Thus there are lots of examples of two dimensional mixed characteristic rings of finite Cohen Macaulay  type.

A natural question regarding rings of finite representation type is to describe its maximal \CM \ modules and find its AR-quiver. It is perhaps hopeless to this to this to all examples of $R^G$ above. However we are able to do this for a large subclass of the examples given above. 
Recall $\sigma \in GL_2(k)$ is said to be a pseudo-reflection if $\rank(\sigma - 1) \leq 1$. Let $G$  be a finite subgroup of $GL_2(V)$ with $p \nmid |G|$. By \ref{inject-k}  the natural map  $\eta \colon G \rt GL_2(k)$ is an inclusion. We say $G$ has no pseudo-reflections except the identity if $\eta(g)$ is not a pseudo-reflection for all $g \neq 1$. 
Let $\PP(V[G])$ be the category of finitely generated projective $V[G]$-modules and let $CM(R^G)$ be the category of maximal \CM \ $R^G$-modules. In \ref{psi-def} we construct a functor $\psi \colon \PP(V[G]) \rt CM(R^G)$. 
We   prove
\begin{theorem}\label{basic-int}(with hypotheses as above) Assume $G \subseteq GL_2(V)$ has no pseudo-reflection except the identity. Let $f \in (x_1,x_2)^l \setminus \pi S$. There  exists a positive integer $l_0(G)$ depending on $G$ such that if $l \geq l_0(G)$ then the functor $\psi \colon \PP(V[G]) \rt CM(R^G)$ has the following properties
\begin{enumerate}[\rm (1)]
\item
If $P$ is an indecomposable projective $V[G]$-module then $\psi(P)$ is an indecomposable maximal \CM \ $R^G$-module.
\item
$P_1 \cong P_2$ in $\PP(V[G])$ if and only if $\psi(P_1) \cong \psi(P_2)$ as $R^G$-modules.
\item
If $M$ is an indecomposable maximal \CM \ $R^G$-module then there exists an indecomposable projective $V[G]$-module $P$ with $\psi(P) \cong M$ as $R^G$-modules.
\end{enumerate}
\end{theorem}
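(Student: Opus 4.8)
The plan is to exhibit the skew group ring $\Lambda := R\ast G$ as a noncommutative crepant resolution of $R^G$, to read off (1) and (2) from this together with idempotent lifting, and to reduce (3) — which amounts to the assertion that $R$ is a representation generator of $CM(R^G)$ — to Auslander's theorem \cite{Aus-1} for the quotient singularity $T^G$, $T = k[[x_1,x_2]]$; the hypothesis $l\geq l_0(G)$ is what makes this last reduction go through.

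\emph{Structural facts (any $l\geq 2$).} From the construction in \ref{psi-def} one has $\psi(V[G])\cong R$, and $\psi(P)$ is a direct summand of $R^n$ whenever $P$ is a direct summand of $V[G]^n$; so $\psi$ takes values in $\add_{R^G}(R)\sub CM(R^G)$. Since $\eta\colon G\rt GL_2(k)$ is injective and no $\eta(g)$ with $g\neq 1$ is a pseudo-reflection, $\rank(\eta(g)-1)=2$ for $g\neq 1$; as $G$ acts $k$-linearly on $\m_R/\m_R^2\cong k\ov{x}_1\oplus k\ov{x}_2$ via $\eta$, the matrix $g|_{V^2}-1\in M_2(V)$ is invertible, so the fixed ideal $(x_1-g(x_1),x_2-g(x_2))R$ equals $\m_R$ and each $g\neq 1$ fixes only the closed point of $\Spec R$. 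Hence $G$ acts on the regular local ring $R$ without pseudo-reflections, $R^G$ is a normal domain with an isolated singularity, and (Auslander's isomorphism) $\Lambda = R\ast G\rt \Hom_{R^G}(R,R)$ is an isomorphism. Because $p\nmid|G|$ we get $\operatorname{gldim}\Lambda=\operatorname{gldim}R=2=\dim R^G$, and $\Lambda$, being $R$-free of rank $|G|$, is maximal \CM \ over $R^G$; so every maximal \CM \ $\Lambda$-module is projective (Auslander--Buchsbaum) and $\Hom_{R^G}(R,-)$ is an equivalence $\add_{R^G}(R)\simeq\PP(\Lambda)$. Lifting idempotents along $\Lambda\twoheadrightarrow\Lambda/\m_R\Lambda=k[G]$ and along $V[G]\twoheadrightarrow V[G]/\pi V[G]=k[G]$ (legitimate as $R,V$ are complete and $k[G]$ is semisimple, being the radical quotient of each) yields bijections on isomorphism classes of indecomposables $\PP(V[G])\leftrightarrow\PP(k[G])\leftrightarrow\PP(\Lambda)$, and one checks from the definition of $\psi$ that $\Hom_{R^G}(R,\psi(P))$ is the projective $\Lambda$-module induced from $P$. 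This gives (1) and (2), and shows (3) is equivalent to $\add_{R^G}(R)=CM(R^G)$.

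\emph{Reduction of (3) modulo $\pi$.} Since $\pi-f\in\m\setminus\m^2$ with $f\in(x_1,x_2)^l$ and $f\notin\pi S$, we have $R/\pi R\cong T/fT$, and using $p\nmid|G|$ (so that the Reynolds operator commutes with reduction mod $\pi$) $R^G/\pi R^G\cong S^G/(\pi,f)\cong T^G/fT^G$; moreover $\pi$ is a nonzerodivisor on $R^G$ and on every maximal \CM \ $R^G$-module (these are torsion-free over the domain $R^G$). By \cite{Aus-1}, $T^G$ has finite \CM \ type and $\add_{T^G}(T)=CM(T^G)$; since $T^G$ is an isolated singularity, for indecomposable maximal \CM \ $T^G$-modules $N,N'$ the groups $\Ext^{i}_{T^G}(N,N')$ $(i\geq 1)$ are of finite length, and likewise the $\Ext$-modules over $S^G$ relevant below become finite length after $\otimes_{S^G}R^G$, because the non-free locus of $S$ over $S^G$ is contained in $V(x_1,x_2)$, which meets $V(\pi-f)$ only in the closed point ($f$ has no constant term in $x_1,x_2$). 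Let $l_0(G)$ be chosen — and this can be done depending only on $G$, by relating everything back to $T^G$ and its finitely many indecomposable maximal \CM \ modules — so that $l\geq l_0(G)$ forces $\pi=f$ to annihilate all these finite-length modules. Given an indecomposable maximal \CM \ $R^G$-module $M$, its reduction $M/\pi M$ is maximal \CM \ over $T^G/fT^G$ and locally free on the punctured spectrum; one shows it lifts to an indecomposable maximal \CM \ $T^G$-module $N$ with $N/fN\cong M/\pi M$, hence $M/\pi M\in\add_{T^G/fT^G}(R/\pi R)$ by Auslander; then, choosing a split surjection $(R/\pi R)^n\twoheadrightarrow M/\pi M$ and lifting the corresponding idempotent of $\Hom_{R^G}(R^n,R^n)$ modulo $\pi$ — the obstruction lies in $\Ext^1_{R^G}(M,R^n)$, killed by $\pi=f$, whereupon Nakayama applies — exhibits $M$ as a direct summand of $R^n$. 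Thus $\add_{R^G}(R)=CM(R^G)$.

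The main obstacle is exactly this reduction: reduction modulo $\pi$ does not commute with $\Hom$, so liftings of maximal \CM \ modules, of split surjections and of idempotents from $R^G/\pi R^G$ to $R^G$ are obstructed by $\Ext^1$- and $\Ext^2$-modules, and the role of $l_0(G)$ is precisely that, once $f$ lies in a sufficiently high power of $(x_1,x_2)$, the element $\pi=f$ kills these finite-length obstructions; getting the bound to depend only on $G$ (and not on the particular $f$) is the delicate bookkeeping point. I would keep a second route to (3) in reserve: $R^G$ has finite \CM \ type (\ref{herzog}), so its AR-quiver is connected, and a McKay-type identification of the AR-sequence of $R^G$ starting at $\psi(P)$ — whose middle term should be $\psi(P\otimes_V V^2)$, with $V^2$ the given faithful representation, itself a projective $V[G]$-module since $V[G]$ is hereditary — would show that no irreducible map of $CM(R^G)$ leaves $\add_{R^G}(R)$, whence $\add_{R^G}(R)=CM(R^G)$ by connectedness; establishing that McKay formula over $R^G$ reintroduces $l_0(G)$ by the same mechanism.
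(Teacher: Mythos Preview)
Your treatment of (1) and (2) via the skew group ring $\Lambda = R*G$ is genuinely different from the paper's. The paper reduces modulo $\pi$ to the equicharacteristic ring $A^G = k[[x_1,x_2]]^G$ and uses Yoshino's notion of an efficient system of parameters $(u_1,u_2)$: the invariant $l_0(G)$ is \emph{defined} precisely so that $\ov{f}\in(u_1^2,u_2^2)$, whence reduction modulo $\ov f$ reflects indecomposability and isomorphism of maximal \CM\ $A^G$-modules; combined with Auslander's results for $A^G$ and idempotent lifting along $V[G]\to k[G]$, this gives (1) and (2). Your route, working directly with $R*G\cong\operatorname{End}_{R^G}(R)$ and projectivization, avoids the efficient-parameter machinery entirely and --- provided the Auslander isomorphism over $R$ and the compatibility $\Hom_{R^G}(R,\psi(P))\cong R\otimes_V P$ are checked carefully --- yields (1) and (2) for all $l\geq 2$, which is stronger than what the paper states. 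The ingredients you need are essentially Proposition~\ref{ht-1} and the Auslander--Reiten equivalence the paper sets up for $S$ in Section~6; you would have to redo these over $R$, which is in fact easier since $R$ is two-dimensional.

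For (3), however, you miss the short argument and propose one with real gaps. You correctly observe that (3) is equivalent to $\add_{R^G}(R) = CM(R^G)$. But this is exactly what the \emph{proof} of Theorem~\ref{herzog} shows: every maximal \CM\ $R^G$-module $M$ is reflexive, hence a direct summand of $\Hom_{R^G}(M^*,R)$, which is $R$-free; so every indecomposable maximal \CM\ $R^G$-module is already a summand of $R$. You cite \ref{herzog} only for ``finite \CM\ type'' and instead attempt to lift maximal \CM\ modules from $T^G/fT^G$ to $T^G$ and then lift split surjections modulo $\pi$. Neither step is justified: maximal \CM\ modules over $T^G/\ov f$ need not lift to $T^G$ without further argument, the surjectivity of $\operatorname{End}_{R^G}(R^n)\to\operatorname{End}_{R^G/\pi}((R/\pi)^n)$ (needed before any idempotent can be lifted) is obstructed by $\Ext^1_{R^G}(R,R)$ which you have not shown to be $\pi$-torsion, and your $l_0(G)$ is never actually defined. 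In the paper, (3) follows in two lines from (1), the identity $\psi(V[G])\cong R$, and the proof of \ref{herzog}, with no use of $l_0(G)$ at that step.
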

We then in Theorem \ref{ar-main} construct all AR-sequences of $R^G$. Finally in Theorem \ref{AR-quiver} we  show that the AR-quiver of $R^G$ is isomorphic to the McKay graph of $G$.

We now describe in brief the contents of this paper. In section two we discuss a few generalities which we need. In section three we construct our examples of mixed characteristic rings of dimension two having finite representation type. We also give many examples of finite groups of $GL_2(V)$ with $p \nmid |G|$. 
In the next section we construct a few functors that we need. In section 5 we prove Theorem \ref{basic-int}. In section 6 we discuss a few preliminaries we need to construct the AR-sequences of $R^G$. In the next section we determine all AR sequences of $R^G$ and we show that the AR-quiver of $R^G$ is isomorphic to the McKay graph of $G$. Finally in section 8 we give an explicit example which illustrates our results. 

\section{generalities}
All the results in this section are either known or easy extensions of  known results. However for the convenience of the reader we give a sketch of a proof of all the results in this section. The main goal of this section is to prove a generalization of a theorem due to Herzog \cite[1.7]{Herzog}.

In this section $(R,\m)$ will denote a Noetherian local ring.  Let $\Aut(R)$ be the group of automorphism's  of $R$. We note that if $f \in \Aut(R)$ then it is local, i.e., $f(\m) \subseteq \m$. 
 Let $G$ be a finite group and let $ \eta\colon G \rt \Aut(R)$ be a group homomorphism. In this case we say $G$ \textit{ acts on} $R$. We assume that $|G|$ is a unit in $R$.
 Let
 $$R^G = \{ x \in R \mid \sigma(x) = x \  \text{for all} \ \sigma \in G \},$$
 be the ring of invariants of $G$. It is easy to see that $R^G$ is local with maximal ideal $\n = \m \cap R^G$. Clearly $R$ is integral over $R^G$. We  have a Reynolds operator
 $\rho \colon R \rt R^G$ defined as
 $$\rho(x) = \frac{1}{|G|} \sum_{\sigma \in G} \sigma(x).$$
 It is easily verified that $\rho$ is $R^G$-linear. Using the Reynolds operator it can be easily seen  that if $I$ is an ideal in $R^G$ then
 $IR \cap R^G = I$. It follows that $R^G$ is a Noetherian ring.

\begin{lemma}\label{top}
 For $j \geq 1$ set $\n_j = \m^j \cap R^G$. Then the filtration $\{\n_j\}$ defines on $R^G$ the same topology as the $\n$-adic topology.
\end{lemma}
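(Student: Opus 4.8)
The plan is to show that the filtrations $\{\n_j\}_{j\ge 1}$ and $\{\n^j\}_{j\ge 1}$ of $R^G$ are cofinal, which is precisely the assertion. One inclusion is immediate: since $\n=\m\cap R^G\sub\m$ and $\n^j$ is an ideal of $R^G$, we have $\n^j\sub\m^j\cap R^G=\n_j$ for every $j$. So the task reduces to producing, for each $j\ge 1$, an index $N$ with $\n_N\sub\n^j$.

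For this I would exploit that the extended ideal $\n R$ is $\m$-primary. First one checks $\sqrt{\n R}=\m$: if $P\in\Spec R$ contains $\n R$ then $P\cap R^G\supseteq \n R\cap R^G=\n$, and since $\n$ is the maximal ideal of the local ring $R^G$ this forces $P\cap R^G=\n$; as $R$ is integral over $R^G$, the induced injection $R^G/\n\hookrightarrow R/P$ is an integral extension of a domain over a field, so $R/P$ is itself a field and $P=\m$ because $R$ is local. Since $R$ is Noetherian, $\sqrt{\n R}=\m$ gives $\m^{N_0}\sub\n R$ for some $N_0\ge 1$. Then for every $j\ge 1$,
\[
\m^{N_0 j}=(\m^{N_0})^{j}\sub(\n R)^{j}=\n^{j}R ,
\]
and intersecting with $R^G$ and invoking the Reynolds-operator identity $IR\cap R^G=I$ with $I=\n^{j}$ yields
\[
\n_{N_0 j}=\m^{N_0 j}\cap R^G\sub\n^{j}R\cap R^G=\n^{j}.
\]
Taking $N=N_0 j$ finishes the cofinality argument, and hence the lemma.

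The one step that genuinely needs care is the claim that $\n R$ is $\m$-primary; the rest is bookkeeping. Note that the naive route --- Artin--Rees applied to the inclusion $R^G\sub R$ --- is not available here, since $R$ need not be a finitely generated $R^G$-module; the point of passing to $\n R$ is exactly to circumvent this, using only integrality of $R$ over $R^G$ together with the fact that $R$ is local (so that $\m$ is the unique prime of $R$ over $\n$). Once that is in hand, the passage from radicals to powers and the final intersection with $R^G$ are entirely standard.
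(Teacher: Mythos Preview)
Your proof is correct and follows essentially the same approach as the paper's: show $\n^j\subseteq\n_j$, use that $\n R$ is $\m$-primary to get $\m^s\subseteq\n R$ for some $s$, and then invoke $IR\cap R^G=I$ to conclude $\n_{sj}\subseteq\n^j$. The only difference is that you supply a justification (via integrality and locality) for the $\m$-primariness of $\n R$, which the paper's sketch simply asserts.
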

\begin{proof}(Sketch)
 Clearly $\n^j \subseteq \n_j$. Also $\n R$ is $\m$-primary. Say $\m^s \subseteq \n R$. Then notice for every $t \geq 1$ we have
 $$ \n_{st} = \m^{st}\cap R^G \subseteq \n^tR \cap R^G = \n^t. $$
 \end{proof}
Our next result considers the case when $R$ is complete.
\begin{proposition}\label{complete}
If $R$ is complete \wrt\ $\m$ then $R^G$ is complete \wrt \ $\n$.
\end{proposition}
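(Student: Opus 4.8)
The plan is to exploit that completeness is a topological notion and that, because $|G|$ is a unit, the $G$-invariants functor is exact on the relevant short exact sequences and commutes with the relevant inverse limit. By Lemma~\ref{top} the $\n$-adic topology on $R^G$ coincides with the topology defined by the filtration $\{\n_j\}$, so it suffices to show that the canonical map $R^G \rt \varprojlim_j R^G/\n_j$ is an isomorphism.

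First I would note that each $\si \in G$ is a local automorphism of $R$, hence $\si(\m^j) = \m^j$, so $G$ acts on every Artinian quotient $R/\m^j$; since $R$ is complete, $R = \varprojlim_j R/\m^j$ as rings carrying this $G$-action. Applying $(-)^G$ to $0 \rt \m^j \rt R \rt R/\m^j \rt 0$ gives an exact sequence $0 \rt \n_j \rt R^G \rt (R/\m^j)^G$, and the last map is in fact surjective: given $\ov y \in (R/\m^j)^G$, lift it to some $y \in R$; then $\rho(y) \in R^G$ and $\ov{\rho(y)} = \ov y$ since $\ov y$ is $G$-fixed. This identifies $R^G/\n_j \cong (R/\m^j)^G$ compatibly in $j$. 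I expect this surjectivity — i.e. the exactness of $(-)^G$ here — to be the only step that is not purely formal, and it is precisely where the standing hypothesis that $|G|$ be a unit enters, through the Reynolds operator $\rho$.

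Finally, $(-)^G$ is itself a limit (the equalizer of the diagonal and the action map into $\prod_{\si \in G}(-)$), hence commutes with inverse limits, so
\[
R^G = \Bigl(\varprojlim_j R/\m^j\Bigr)^{G} = \varprojlim_j (R/\m^j)^G = \varprojlim_j R^G/\n_j ,
\]
and a routine check shows the composite is the canonical map; thus $R^G$ is complete with respect to $\{\n_j\}$, hence with respect to $\n$. (One could also bypass Lemma~\ref{top}: using that $\n R$ is $\m$-primary and that $\n^j R \cap R^G = \n^j$, the $R^G$-module splitting $R = R^G \oplus \ker\rho$ induces $R/\n^jR = R^G/\n^j \oplus \ker\rho/\n^j\ker\rho$ for all $j$, so on passing to inverse limits $R^G$ appears as a direct summand of the complete module $R$ on which the induced topology is the $\n$-adic one, and a retract of a complete module is complete.)
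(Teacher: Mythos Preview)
Your argument is correct. Both your proof and the paper's rest on the same two ingredients --- Lemma~\ref{top} and the Reynolds operator --- but package them differently. The paper argues elementwise with Cauchy sequences: a Cauchy sequence in $R^G$ is Cauchy in $R$ (since $\n^j \subseteq \m^j$), hence has a limit $x$ in $R$; then continuity of $\widetilde\rho\colon R\to R$ (each $\sigma$ is local) forces $x=\rho(x)\in R^G$, and continuity of $\rho\colon R\to R^G$ (which is where Lemma~\ref{top} enters, since $\rho(\m^j)\subseteq\m^j\cap R^G=\n_j$) gives convergence in the $\n$-topology. Your inverse-limit formulation is more categorical: the identification $R^G/\n_j\cong(R/\m^j)^G$ isolates precisely where the Reynolds operator is needed, and the fact that $(-)^G$, being a limit, commutes with inverse limits handles the rest. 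Your parenthetical alternative --- exhibiting $R^G$ as an $R^G$-module retract of the $\n$-adically complete module $R$ --- is in spirit the closest to the paper's proof, recast module-theoretically. Either approach is fine; yours has the small advantage that the exactness step $R^G/\n_j\twoheadrightarrow(R/\m^j)^G$ makes the role of the hypothesis $|G|\in R^\times$ completely explicit.
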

 \begin{proof}(Sketch)
 Define $\widetilde{\rho} \colon R \rt R$ to be $\widetilde{\rho}(x) = \rho(x)$.   Each $\sigma \in G$ is a local map. So $\widetilde{\rho}$ is continuous. Using \ref{top} it follows that $\rho$ is continuous. Let $\{x_n \}$ be a Cauchy sequence in $R^G$. Since $\n^j \subseteq \m^j$ for all $j \geq 1$ we get that $\{ x_n \}$ is a Cauchy sequence in $R$. As $R$ is complete $\{ x_n \}$ is convergent, say it converges to $x$. As $\widetilde{\rho}$ is continuous we get that
 $$ x_n = \widetilde{\rho}(x_n) \rt \widetilde{\rho}(x).$$
 So $\rho(x) = x$, i.e., $x \in R^G$. As $\rho$ is continuous we get that
 $\rho(x_n) \rt \rho(x)$ in $R^G$. So $x_n \rt x$ in $R^G$.
\end{proof}
In general it is not clear whether $R$ is finitely generated as a $R^G$-module. However when $R$ is complete we have the following result:
\begin{theorem}\label{finite-gen}
Let $(R,\m)$ be a complete Noetherian local ring and let $G$ be a finite group  acting on $R$.  Assume $|G|$ is invertible in $R$. Then $R$ is a finite $R^G$-module.
\end{theorem}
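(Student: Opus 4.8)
The plan is to reduce everything to a ``complete Nakayama'' argument whose only nontrivial input is that $R/\n R$ is a finite $R^G$-module. First I would collect the facts already available: $R^G$ is Noetherian (shown above via the Reynolds operator), it is local with maximal ideal $\n = \m\cap R^G$, the extension $R^G\sub R$ is integral, and, since $R$ is complete, $R^G$ is $\n$-adically complete by \ref{complete}. Because $\n$ is maximal in $R^G$ and $R$ is integral over $R^G$ and local, every prime of $R$ containing $\n R$ contracts to $\n$ and is therefore maximal, hence equals $\m$; thus $\n R$ is $\m$-primary (as was already noted in the proof of \ref{top}), so $R/\n R$ is an Artinian local ring, of finite length as an $R$-module.

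Next I would show that the residue field $k = R/\m$ is finite over $k' := R^G/\n$ (viewed inside $k$ via $\n = \m\cap R^G$). Every $\si\in G$ is local, so $G$ acts on $k$; let $\ov G$ be its image in $\Aut(k)$, a finite group of order dividing $|G|$. By Artin's theorem on fixed fields, $k$ is a finite Galois extension of $k^{\ov G}=k^G$ with $[k:k^G]=|\ov G|\le |G|$. I would then identify $k^G$ with $k'$: the inclusion $k'\sub k^G$ is clear, and if $a\in R$ has image in $k^G$ then $\si(a)-a\in\m$ for all $\si\in G$, so $\rho(a)\in R^G$ satisfies $\rho(a)-a=\frac{1}{|G|}\sum_{\si\in G}(\si(a)-a)\in\m$, whence the image of $a$ lies in $k'$ --- this is where the invertibility of $|G|$ enters. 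Hence $[k:k']\le|G|<\infty$. Since $\n$ annihilates $R/\n R$, the latter is a $k'$-vector space, and a composition series of $R/\n R$ over $R$ has every factor isomorphic to $k$, which is $[k:k']$-dimensional over $k'$; therefore $\dim_{k'}(R/\n R)<\infty$ and $R/\n R$ is a finite $R^G$-module.

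Finally I would run the completeness argument. Pick $u_1,\dots,u_n\in R$ whose images generate $R/\n R$ and set $M=\sum_i R^G u_i\sub R$, a finitely generated $R^G$-module. Then $R=M+\n R$, hence $\n^j R=\n^j M+\n^{j+1}R$ and $R=M+\n^j R$ for every $j\ge 1$. Given $x\in R$, peeling off one layer at a time produces $m_0\in M$ and $n_j\in\n^j M$ with $x-(m_0+n_1+\cdots+n_j)\in\n^{j+1}R$ for all $j$; the partial sums form an $\n$-adic Cauchy sequence in $M$, and $M$ is $\n$-adically complete because it is a finitely generated module over the Noetherian $\n$-adically complete ring $R^G$. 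Hence the partial sums converge to some $s\in M$ with $x-s\in\bigcap_j\n^j R\sub\bigcap_j\m^j R=0$, so $x=s\in M$ and $R=M$ is module-finite over $R^G$.

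The hard part will be the field-theoretic step. It is tempting to argue that $k$ is finite over $k'$ merely because each element of $k$ is a root of the monic polynomial $\prod_{\si\in G}\bigl(T-\ov\si(\cdot)\bigr)$ of degree $\le|G|$ over $k'$; but a bound on the degrees of elements does \emph{not} force finiteness of a field extension in general (it can fail in the presence of inseparability), so one must genuinely exploit that $|G|$ is invertible --- through the separability built into Artin's theorem and through the Reynolds operator to see $k^G=k'$.
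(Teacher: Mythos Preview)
Your proof is correct and follows essentially the same route as the paper. Both arguments (i) use \ref{complete} to get $R^G$ $\n$-adically complete, (ii) observe that $\n R$ is $\m$-primary so $R/\n R$ has finite length, (iii) show $k=R/\m$ is finite over $R^G/\n$ by identifying the latter with $k^G$ via the Reynolds operator and invoking Artin's theorem, and (iv) conclude by a ``complete Nakayama'' step; the paper packages (iii) as a separate lemma (\ref{finite-lenght}) and cites \cite[8.4]{Mat} for (iv), whereas you carry out both steps by hand.
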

An essential ingredient to prove Theorem \ref{finite-gen} is the following:
\begin{lemma}\label{finite-lenght}
(with hypotheses as above) Let $E$ be an $R$-module of finite length. Then $E$ is    finitely generated as a $R^G$-module.
\end{lemma}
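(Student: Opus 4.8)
The plan is to reduce, by d\'evissage, to the single case $E = R/\m$, and then to prove that $R/\m$ is finite-dimensional as a vector space over $\kappa := R^G/\n$, the residue field of $R^G$.

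First I would note that since $E$ has finite length over $R$ it carries a finite filtration $0 = E_0 \subseteq E_1 \subseteq \cdots \subseteq E_\ell = E$ by $R$-submodules with every successive quotient $E_i/E_{i-1}$ isomorphic to $R/\m$. Since the class of finitely generated $R^G$-modules is closed under extensions, it suffices to show that $R/\m$ is finitely generated over $R^G$; and because $\n$ annihilates $R/\m$, this is the same as showing $\dim_\kappa (R/\m) < \infty$.

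The real content is this finiteness of the residue field extension, and here is how I would obtain it. Each $\sigma \in G$ is a local automorphism of $R$, hence induces an automorphism $\overline{\sigma}$ of $k := R/\m$; let $\overline{G} \subseteq \Aut(k)$ be the (necessarily finite) image of $G$. I claim $\kappa = k^{\overline{G}}$. The containment $\kappa \subseteq k^{\overline{G}}$ is clear. For the reverse, given $\overline{y} \in k^{\overline{G}}$ lift it to $y \in R$; then $\sigma(y) - y \in \m$ for all $\sigma$, so, using that $|G|$ is a unit in $R$,
\[
\rho(y) - y = \frac{1}{|G|}\sum_{\sigma \in G}\bigl(\sigma(y)-y\bigr) \in \m,
\]
whence $\overline{y} = \overline{\rho(y)} \in \kappa$ since $\rho(y) \in R^G$. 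Artin's lemma, applied to the finite automorphism group $\overline{G}$ of the field $k$, then gives that $k/k^{\overline{G}} = k/\kappa$ is finite, with $[k:\kappa] = |\overline{G}| \le |G|$. Hence $R/\m$ is $\kappa$-finite, so $R^G$-finite, and climbing the filtration shows $E = E_\ell$ is finitely generated over $R^G$.

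The step I expect to be the genuine obstacle is precisely this finiteness of $[R/\m : R^G/\n]$. The tempting shortcut — observing that every $\overline{x} \in k$ is a root of the monic polynomial obtained by reducing $\prod_{\sigma \in G}(T - \sigma(x)) \in R^G[T]$, so that each element of $k$ has degree $\le |G|$ over $\kappa$ — does not by itself suffice, since a field extension in which every element has uniformly bounded degree can still be infinite (for instance a purely inseparable extension of an imperfect field). What forces finiteness is the identification of $\kappa$ with the fixed field $k^{\overline{G}}$ together with Artin's lemma. (Note that completeness of $R$ plays no role in this lemma; it will enter only in deducing Theorem \ref{finite-gen} from it.)
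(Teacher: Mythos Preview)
Your proof is correct and follows essentially the same route as the paper: reduce by d\'evissage to $E = R/\m$, identify $R^G/\n$ with the fixed field $k^{\overline G}$ via the Reynolds operator, and conclude finiteness by Artin's lemma. The paper's argument is terser but otherwise identical.
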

\begin{proof}
By an easy induction  it suffices to prove $k = R/\m$ is    finitely generated as a $R^G$-module.
Note the action of $G$ on $R$ induces an action on $k$. Let $k^G$ denote the fixed field of this action.  Set $\n = R^G \cap \m$. Note we have a natural map $i \colon R^G/\n \rt R/\m$.

\textit{Claim}:  $i(R^G/\n) = k^G$.

Let $t = [\xi] \in k^G$. Then $\sigma(t) = t $ for every $\sigma \in G$. So
$\sigma(\xi) = \xi + v_{\xi,\sigma} $ for some $v_{\xi,\sigma} \in \m$. So
$\rho(\xi) = \xi + v_{\xi}$ for some $v_{\xi} \in \m$. Set $\theta = \rho(\xi) \in R^G$. Clearly $i([\theta]) = [\xi] = t$.

By a result due to Artin,  $k$ is a finite extension of $k^G$, cf. \cite[Chapter 6,  Theorem 1.8]{Lang}. It follows that $k$ is a finite $R^G$-module.
\end{proof}
We now give
\begin{proof}[Proof of Theorem \ref{finite-gen}]
Set $\n = \m \cap R^G$. By \ref{complete} $R^G$ is complete \wrt \ $\n$.  Notice
$$ \bigcap_{j \geq 1} \n^j R \subseteq \bigcap_{j \geq 1} \m^j = 0.$$
So $R$ is separated \wrt\ the $\n$-adic topology. Also note that $R/\n R$ has finite length. So by \ref{finite-lenght} $R/\n R$ is finitely generated as a $R^G$-module.
Thus by \cite[8.4]{Mat},  $R$ is finitely generated as a $R^G$-module.
\end{proof}

We now extend a result of Herzog, \cite[1.7]{Herzog}, with nearly the same proof.
\begin{theorem}\label{herzog}
Let $(R,\m)$ be a two dimensional complete regular local ring and let $G$ be a finite group  acting on $R$.  Assume $|G|$ is a unit in $R$. Then $R^G$ is a normal \CM \
domain of dimension two. Furthermore $R^G$ is of finite \CM \ type.
\end{theorem}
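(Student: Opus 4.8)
The plan is to reduce everything to Herzog's original theorem \cite[1.7]{Herzog}, which handles the case of a finite group acting on a two dimensional complete regular local ring \emph{containing a field}, by showing that the ``mixed characteristic'' hypothesis costs us nothing once the order of $G$ is invertible. First I would establish that $R^G$ is a normal Cohen--Macaulay domain of dimension two. That $R^G$ is a domain is clear since $R$ is a domain and $R^G \subseteq R$. By Theorem \ref{finite-gen} (applied with the complete regular local ring $R$), $R$ is a finite $R^G$-module, and by \ref{complete} $R^G$ is complete local; hence $R$ is the integral closure of $R^G$ in its fraction field up to the standard argument, but more directly: the Reynolds operator $\rho \colon R \to R^G$ splits the inclusion $R^G \hookrightarrow R$ as $R^G$-modules, so $R^G$ is a direct summand of $R$. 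Since $R$ is a two dimensional regular (hence Cohen--Macaulay and normal) local ring that is module-finite over $R^G$, we get $\dim R^G = \dim R = 2$; being an $R^G$-direct summand of the Cohen--Macaulay module $R$, the ring $R^G$ is Cohen--Macaulay; and $R^G = R \cap \operatorname{Frac}(R^G)$ inside $\operatorname{Frac}(R)$ (again via $\rho$) shows $R^G$ is integrally closed, i.e.\ normal.

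Next I would address the finite Cohen--Macaulay type assertion, which is the substantive part. The idea is that Herzog's proof of \cite[1.7]{Herzog} is purely ring-theoretic/homological and does not actually use that $R$ contains a field; it uses only that $R$ is a two dimensional complete regular local ring, that $G$ is finite with $|G|$ invertible in $R$, and that $R$ is module-finite over $R^G$. The key structural fact is that the skew group algebra $R * G$ (or equivalently $\operatorname{End}_{R^G}(R)$, these being isomorphic when $|G|$ is invertible and $R/R^G$ is suitably unramified in codimension one) has finite global dimension equal to $2$, so that every maximal Cohen--Macaulay $R^G$-module $M$ has $R \otimes_{R^G} M$ (or $\operatorname{Hom}_{R^G}(R, M)$) a finitely generated module over $R * G$ of finite projective dimension, and one then decomposes using that $R$ itself, viewed in the module category over $R^G$, has finitely many indecomposable summands after tensoring up. So the plan is: verify the isomorphism $R * G \cong \operatorname{End}_{R^G}(R)$ and that this algebra has finite global dimension (here one uses $|G|$ invertible and the regularity of $R$, exactly as in Herzog), and then run Herzog's counting argument verbatim. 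One checks along the way that every place Herzog invokes the equicharacteristic hypothesis it was only for convenience or can be bypassed using completeness of $R$ (for the Krull--Schmidt property) and Theorem \ref{finite-gen} (for module-finiteness), both of which we have established in this section without any characteristic assumption.

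The main obstacle I anticipate is precisely the claim that $R * G \cong \operatorname{End}_{R^G}(R)$ and has global dimension two in the mixed characteristic setting. In the equicharacteristic case one often passes to a graded or power series presentation and uses explicit resolutions; here $R = S/(\pi - f)$ sits over a DVR $V$, so one must argue more intrinsically. The cleanest route is: (i) $R * G$ is a maximal Cohen--Macaulay $R^G$-module (it is a direct summand of a matrix algebra over $R$, or one computes its depth directly), so it has finite global dimension iff it is regular in codimension, reducing to checking the two localizations at height one primes of $R^G$ and the closed point; (ii) at height one primes the ramification is controlled because $G \hookrightarrow GL_2(k)$ (by \ref{inject-k}) acts with no pseudo-reflections generically only when we are in the subclass, but for the \emph{general} statement of finite type we do not need that, we only need the going-down / reflexivity that $\rho$ supplies; (iii) at the closed point one uses that $R$ is regular and $|G|$ a unit so that $k \otimes_{R^G}(R * G)$ is semisimple-by-nilpotent in the right way. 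If this turns out to be delicate, the fallback is to invoke directly the general statement of Herzog (or Auslander's characterization via the double branched cover, \cite{Aus-1, Yoshino}) in the form: a complete normal two dimensional Cohen--Macaulay local ring with a module-finite birational extension that is regular is of finite Cohen--Macaulay type provided the associated endomorphism ring has finite global dimension, and then note that all hypotheses used are characteristic-free. Either way the write-up should be short, essentially ``the proof of \cite[1.7]{Herzog} goes through word for word, using \ref{complete}, \ref{finite-gen}, and the Reynolds operator in place of the equicharacteristic inputs.''
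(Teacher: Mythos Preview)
Your treatment of the first assertion (that $R^G$ is a two-dimensional normal Cohen--Macaulay domain) is essentially the paper's argument and is correct.

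For finite Cohen--Macaulay type, however, you have chosen a much heavier route than the paper, and one that does not quite close. The paper's proof avoids the skew group ring, $\operatorname{End}_{R^G}(R)$, and any global-dimension computation entirely. It runs as follows: since $R^G$ is normal of dimension two, every MCM $R^G$-module $M$ is reflexive, so $M \cong \Hom_{R^G}(M^*, R^G)$ with $M^* = \Hom_{R^G}(M, R^G)$. Because the inclusion $R^G \hookrightarrow R$ is split by the Reynolds operator, $\Hom_{R^G}(M^*, R^G)$ is an $R^G$-direct summand of $\Hom_{R^G}(M^*, R)$. The latter is an $R$-module (through the second argument) of depth $2$ over the two-dimensional regular local ring $R$, hence free as an $R$-module. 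Thus $M$ is an $R^G$-summand of a finite direct sum of copies of $R$, and by Krull--Schmidt (available since $R^G$ is complete) every indecomposable MCM $R^G$-module already occurs as a summand of $R$. That is the whole argument.

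Your plan, by contrast, hinges on $R*G \cong \operatorname{End}_{R^G}(R)$, which as you yourself note requires $R/R^G$ to be unramified in codimension one; this is \emph{not} assumed in Theorem \ref{herzog} (it enters only later in the paper, under the additional hypothesis that $G$ has no pseudo-reflections). Your sketch of how finite global dimension of $R*G$ yields finite CM type of $R^G$ is also unclear at the key step: $R \otimes_{R^G} M$ carries no natural $R*G$-structure, and the equivalence between reflexive $R*G$-modules and reflexive $R^G$-modules that would make the passage work is exactly the one governed by the codimension-one ramification you are trying to sidestep. So the Auslander-style approach you outline does not prove the theorem as stated without the extra pseudo-reflection hypothesis, whereas the paper's reflexivity argument does, and in three lines. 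Your fallback (``Herzog's proof goes through word for word'') is in fact correct, but what that proof \emph{is} is precisely the short reflexivity argument above, not the $R*G$ machinery.
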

\begin{proof}
By \cite[6.4.1]{BH}, $R^G$ is a normal domain. As $R$ is a finite extension of $R^G$ we have $\dim R^G = 2$. So $R^G$ is \CM. It can be easily verified that $R$ is a MCM  $R^G$-module.

As $R^G$ is normal, MCM $R^G$-modules are reflexive. Set $(-)^* = \Hom_{R^G}(-,R^G)$.
Let $M$ be an MCM $R^G$-module. The inclusion $i \colon R^G \rt R$ is a split map as $R^G$-modules. It follows that $\Hom_{R^G}(M^*, R^G)$ is a direct summand of $\Hom_{R^G}(M^*, R)$. But $\Hom_{R^G}(M^*, R)$ has depth $2$ as a $R$-module. So it is free as a $R$-module. Thus $M \cong \Hom_{R^G}(M^*, R^G)$ is a direct  summand (as a $R^G$-module) of some copies of $R$. It follows that if $M$ is an indecomposable MCM $R^G$-module then it is a direct summand of $R$. Thus $R^G$ is of finite \CM \ type.
\end{proof}

We end this section by an elementary result which is crucial in our paper.
\begin{proposition}\label{mod-reg}
Let $(R,\m)$ be a  Noetherian local domain and and let $G$ be a finite group
 acting on $R$.
   Assume $|G|$ is invertible in $R$. Let $x \in R^G$ be non-zero. Set $T = R/(x)$. Note $G$ acts on $T$. Then $T^G \cong R^G/x R^G$.
\end{proposition}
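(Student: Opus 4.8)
The plan is to write down the obvious candidate map and verify that it is a bijection using the Reynolds operator $\rho \colon R \rt R^G$. First I would record that, since $x \in R^G$, the principal ideal $xR$ is stable under every $\sigma \in G$, so $G$ genuinely acts on $T = R/(x)$ (this is the action referred to in the statement) and the quotient map $q \colon R \rt T$ is $G$-equivariant. Consequently $q$ carries $R^G$ into $T^G$ and kills $xR^G$, so it induces a ring homomorphism
$$\phi \colon R^G/xR^G \xar T^G, \qquad s + xR^G \longmapsto q(s).$$
The whole proof then reduces to checking that $\phi$ is injective and surjective.

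For injectivity I would establish the identity $xR \cap R^G = xR^G$ (the inclusion $\supseteq$ being trivial): given $s \in R^G$ with $s = xr$ for some $r \in R$, applying the $R^G$-linear operator $\rho$ and pulling the invariant element $x$ outside gives $s = \rho(s) = \rho(xr) = x\,\rho(r)$ with $\rho(r) \in R^G$; hence $\ker \phi = (xR \cap R^G)/xR^G = 0$. For surjectivity, given a class in $T^G$ represented by some $t \in R$, the condition $\sigma(t) - t \in xR$ for all $\sigma \in G$ lets me compute $\rho(t) = t + \tfrac{1}{|G|}\sum_{\sigma \in G}(\sigma(t) - t) \equiv t \pmod{xR}$, so $\rho(t) \in R^G$ is a preimage of that class under $\phi$. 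Combining the two steps, $\phi$ is an isomorphism of rings.

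There is no real obstacle here; the argument is entirely formal. The one point that deserves attention is that both steps use crucially that $x$ is $G$-invariant: this is exactly what allows $x$ to be factored out of $\rho(xr)$ and, earlier, what makes $(x)$ a $G$-stable ideal so that $T^G$ makes sense at all. I note that the domain hypothesis (which guarantees $x$ is a nonzerodivisor on $R$, hence on $R^G$) is not actually needed for the displayed isomorphism, though it is harmless to keep, since in the applications $x = \pi - f$ and $R$ is regular.
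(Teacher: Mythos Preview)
Your proof is correct and follows essentially the same line as the paper's: both use the Reynolds operator to get injectivity of $R^G/xR^G \to T$ (the paper phrases this as the inclusion $R^G \hookrightarrow R$ being $R^G$-split, which is exactly your computation $xR\cap R^G = xR^G$ in disguise) and both prove surjectivity onto $T^G$ by the identical calculation $\rho(t)\equiv t \pmod{xR}$. Your observation that the domain hypothesis is not actually used is accurate and worth noting.
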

\begin{proof}
The inclusion map $i \colon R^G \rt R$ is split as $R^G$-modules. Thus $\ov{i} \colon R^G/x R^G  \rt T$ is an inclusion. Clearly  $\ov{i}(R^G/x R^G) \subseteq T^G$. Conversely let $t = [\xi] \in T^G$. Then $\sigma(\xi) = \xi + xv_{\xi,\sigma}$ for some  $v_{\xi,\sigma} \in R$. Note $\rho(\xi) = \xi + xv_{\xi}$ for some $v_{\xi} \in R$. Notice $\ov{i}([\rho(\xi)]) = t$. Thus $\ov{i}(R^G/x R^G) = T^G$.
\end{proof}
\section{Construction of two dimensional mixed characteristic rings of finite Cohen Macaulay  type}
In this section $(V,\pi)$ is a complete DVR of characteristic zero having  residue field $k = V/\pi$, an algebraically closed field of characteristic $p > 0$. Let $G$ be a finite subgroup of $GL_2(V)$. We assume that $p \nmid |G|$. So $|G|$ is a unit in $V$. We construct examples of two dimensional mixed characteristic rings of finite Cohen Macaulay  type. We also give ample number of finite subgroups of $GL_2(V)$ with $p \nmid |G|$.

\s Let $x_1, x_2 $ be a basis of $V^2$ on which $G$ naturally acts. Then the action of $G$ can be extended to the ring $S = V[[x_1,x_2]]$. It is clear that $G$  acts on $S$ via local automorphisms. Set $\m = (\pi, x_1, x_2)$, the maximal ideal of $S$. Let $S^G$ be the ring of invariants. By results in the previous section $S^G$ is local with maximal ideal $\n = \m \cap S^G$. Furthermore $S^G$ is complete \wrt \ the $\n$-adic topology. Also $S$ is finitely generated as a $S^G$-module.

\s Let $f \in (x_1,x_2)^2\cap S^G$. We also assume $f \notin \pi S$. Set $R = S/(\pi - f)$. Notice $\pi - f \in \m\setminus \m^2$. So $R$ is a regular local ring of dimension two. It is clear that the maximal ideal $\q$ of $R$ is generated by images of $x_1,x_2$ in $R$. Also note that $\pi - f \in S^G$. So by Proposition \ref{mod-reg} we have that $R^G = S^G/(\pi-f)S^G$. By Theorem \ref{herzog} we have that $R^G$ has finite \CM \ type.

\s \label{natural-map} Notice the natural map $j \colon V \rt R$ is an inclusion, for if $\pi^l \in \ker j$ then $\pi^l \in (\pi - f)$. Then there exists $g \in S$ with $\pi^l = (\pi - f)g$.
As $S$ is a UFD we have that $\pi - f = u\pi^r$ where $u$ is a unit in $S$. This implies that $f \in \pi S$ a contradiction. Also note that $R/\q = k$ has characteristic $p$. So $R$ is of mixed characteristic. It also follows that $R^G$ is of mixed characteristic.

\s Note that there is a natural map $\eta \colon G \rt GL_2(k)$. We prove
\begin{proposition}\label{inject-k}
The map $\eta$ is an inclusion.
\end{proposition}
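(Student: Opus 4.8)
The plan is to prove that $\ker\eta$ is trivial by an elementary valuation argument. First recall that $\eta$ is just reduction modulo $\pi$: it is the restriction to $G$ of the group homomorphism $GL_2(V)\rt GL_2(k)$ induced by $V\rt V/\pi=k$ (this is well defined because the determinant of a matrix in $GL_2(V)$ is a unit of $V$, hence reduces to a unit of $k$). So $g\in\ker\eta$ means exactly that $g\equiv 1\pmod{\pi M_2(V)}$. Since $G$ is finite, such a $g$ has finite order $m$, and $m\mid|G|$; as $p\nmid|G|$ we get $p\nmid m$, and since $V$ is a DVR whose residue field has characteristic $p$, an integer prime to $p$ is a unit of $V$. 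Thus $m$ is invertible in $V$ --- this is the only place the hypothesis $p\nmid|G|$ is used.

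Next, suppose for contradiction that $g\in\ker\eta$ with $g\neq 1$. Using that $V$ is a DVR (so $M_2(V)$ is $\pi$-torsion-free and $\bigcap_{i}\pi^iM_2(V)=0$), write $g=1+\pi^iN$ with $N\in M_2(V)$, $i\geq1$, and $\ov{N}\neq 0$ in $M_2(k)$. Since $1$ and $\pi^iN$ commute, the binomial theorem gives
\[
1=g^m=1+m\pi^iN+\sum_{j=2}^m\binom{m}{j}\pi^{ij}N^j,
\]
hence $m\pi^iN=-\sum_{j\ge 2}\binom{m}{j}\pi^{ij}N^j\in\pi^{2i}M_2(V)\subseteq\pi^{i+1}M_2(V)$ (here we use $i\ge 1$). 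Cancelling $\pi^i$ yields $mN\in\pi M_2(V)$, and since $m$ is a unit of $V$ this forces $N\in\pi M_2(V)$, i.e.\ $\ov{N}=0$, a contradiction. Therefore $g=1$, and $\eta$ is injective.

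I do not expect any real obstacle here; the content is simply that the congruence kernel $1+\pi M_2(V)$ contains no nontrivial element of order prime to $p$ --- equivalently, it is a pro-$p$ group, being an inverse limit of the $p$-groups $(1+\pi^iM_2(V))/(1+\pi^{i+1}M_2(V))\cong M_2(k)$ --- so its intersection with a finite group of order prime to $p$ is trivial. One could phrase the proof that way instead, but the direct binomial computation above is self-contained. The only minor care points are the well-definedness of $\eta$ noted above and the fact that $\pi^{ij}N^j\in\pi^{i+1}M_2(V)$ for $j\ge 2$ requires $i\ge 1$.
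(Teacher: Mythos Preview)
Your proof is correct and takes essentially the same approach as the paper: both write an element of $\ker\eta$ as $I+Y$ with $Y\in\pi M_2(V)$, expand $(I+Y)^m=I$ by the binomial theorem, and use that $m$ is a unit in $V$. The only cosmetic difference is in the final step: the paper factors the binomial identity as $YU=0$ with $U\equiv mI\pmod{\pi}$ and invokes Nakayama to conclude $U\in GL_2(V)$, hence $Y=0$, whereas you extract the exact $\pi$-adic order of $Y$ and derive a contradiction; the content is the same.
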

\begin{proof}
We consider elements of $GL_2(V), GL_2(k)$ as matrices. Let $T \in G$ with $\eta(T) = I$. Then $T = I + Y$ where all entries of $Y$ are in $(\pi)$. Let $m = |G|$. Then
$T^m = I$. So we have
\[
mY + \binom{m}{2}Y^2 + \cdots+ \binom{m}{m-1}Y^{m-1} + Y^m = 0.
\]
Set
$$ U = mI + \binom{m}{2}Y + \cdots+ \binom{m}{m-1}Y^{m-2} + Y^{m-1}.$$
Note $\eta(U) = m I$ is invertible in $GL_2(k)$ since $p \nmid m$. By Nakayama Lemma it follows that $U \colon V^2 \rt V^2 $ is surjective and hence an isomorphism. So $U \in GL_2(V)$.

As $YU = 0$, we get that $Y = 0$. It follows that $\eta$ is injective.
\end{proof}

We now give ample number  of finite subgroups of $GL_2(V)$ with $p \nmid |G|$.
\begin{proposition}\label{construct-groups}
Let $H$ be a finite subgroup of $GL_n(\mathbb{C})$ with $n \geq 1$. Assume $p \nmid m$, where $m = |H|$. Then there is a finite subgroup $G$ of $GL_n(V)$ with $G \cong H$ as groups. Furthermore if $H \subseteq SL_n(\mathbb{C})$ then $G \subseteq SL_n(V)$.
\end{proposition}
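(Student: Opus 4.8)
The plan is to descend the given faithful complex representation $\rho\colon H\rt GL_n(\mathbb{C})$ to a cyclotomic number field, then to a $p$-adic discrete valuation ring, and finally to transport it along an embedding of that ring into $V$; the hypothesis $p\nmid m$ is used exactly once, but crucially. First I would replace $\rho$ by a $GL_n(\mathbb{C})$-conjugate representation $\rho'\colon H\rt GL_n(L)$ defined over the cyclotomic field $L=\mathbb{Q}(\zeta_e)$, where $e$ is the exponent of $H$. This is legitimate by Brauer's splitting-field theorem, which asserts that $\mathbb{Q}(\zeta_e)$ is a splitting field for every finite group of exponent $e$; then $\rho'$ is again faithful and has the same determinant character as $\rho$, so $\rho'$ takes values in $SL_n(L)$ whenever $H\sub SL_n(\mathbb{C})$. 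Since $e\mid m$ we get $p\nmid e$, and this is the single place where the standing hypothesis is invoked.

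Next I would produce an injective ring homomorphism $\iota\colon D\hookrightarrow V$ from a discrete valuation ring $D$ with fraction field $L$. Because $p\nmid e$, the polynomial $X^e-1$ is separable over the residue field $k=V/\pi$, and as $k$ is algebraically closed it contains a primitive $e$-th root of unity; since $V$ is complete, Hensel's lemma lifts this to an element $\omega\in V$ of exact order $e$. The assignment $\zeta_e\mapsto\omega$ defines a ring homomorphism $\iota_0\colon\Z[\zeta_e]\rt V$, and $\iota_0$ is injective: its kernel is a prime of the one-dimensional domain $\Z[\zeta_e]$, and a nonzero such prime has residue field of positive characteristic, which cannot be embedded in the characteristic-zero ring $V$. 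Put $\mathfrak{P}=\iota_0^{-1}(\pi V)$, a maximal ideal of $\Z[\zeta_e]$ lying over $p$. Since $\Z[\zeta_e]$ is the ring of integers of $L$, hence a Dedekind domain, $D:=\Z[\zeta_e]_{\mathfrak{P}}$ is a discrete valuation ring with fraction field $L$; and because $\iota_0$ carries $\Z[\zeta_e]\setminus\mathfrak{P}$ into $V\setminus\pi V=V^{\times}$, it extends to an injection $\iota\colon D\hookrightarrow V$.

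Now I would realize $\rho'$ integrally over $D$ by the usual averaging trick: $\Lambda=\sum_{h\in H}\rho'(h)(D^n)\sub L^n$ is a finitely generated, torsion-free module over the principal ideal domain $D$, hence free of rank $n$; it is $H$-stable, and $H$ acts on $\Lambda\otimes_D L=L^n$ via $\rho'$. Choosing a $D$-basis of $\Lambda$ gives a representation $\rho_0\colon H\rt GL_n(D)$ that is $GL_n(L)$-conjugate to $\rho'$, hence faithful, and valued in $SL_n(D)$ when $H\sub SL_n(\mathbb{C})$. Composing with $GL_n(\iota)\colon GL_n(D)\rt GL_n(V)$ yields $\rho_V\colon H\rt GL_n(V)$. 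It is injective, for if $\rho_V(h)=I_n$ then every entry of $\rho_0(h)-I_n$ lies in $\ker\iota=0$, so $\rho_0(h)=I_n$ and $h=1$. Moreover $\det\rho_V(h)=\iota(\det\rho_0(h))$, which is $1$ whenever $H\sub SL_n(\mathbb{C})$. Hence $G:=\rho_V(H)$ is a finite subgroup of $GL_n(V)$, contained in $SL_n(V)$ in the special case, with $G\cong H$; note also $p\nmid|G|=m$.

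The crux is the first step. One must control the field of definition of $\rho$: the naive shortcut of realizing $\rho$ over $\ov{\mathbb{Q}}$, or over an arbitrary number field, and then embedding that field into $V$, fails in general, because the only part of $V$ one can rely on is its Cohen subring, which is absolutely unramified over $\Z_p$ and so contains no algebraic numbers that are ramified at $p$. Brauer's splitting-field theorem is precisely what allows the field of definition to be taken as $\mathbb{Q}(\zeta_e)$, and $\mathbb{Q}(\zeta_e)$ is unramified at $p$ since $p\nmid e$; equivalently, it is what guarantees that the root of unity $\omega$ of the second step exists inside $V$. Everything else is routine and uses only that $V$ is a complete local domain of mixed characteristic $(0,p)$ with algebraically closed residue field: the averaging construction of the $H$-stable lattice is standard, and the remaining steps are bookkeeping. (One could instead try to lift the semisimple mod-$p$ reduction of $\rho$ directly to a $V[H]$-lattice, the relevant $\Ext^2$ over $V[H]$ vanishing because $|H|$ is invertible, but the cyclotomic route is more self-contained.)
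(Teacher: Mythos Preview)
Your proof is correct and follows essentially the same route as the paper: Brauer's splitting-field theorem to descend to a cyclotomic field, Hensel's lemma (using $p\nmid m$) to produce the needed root of unity in $V$ and hence an embedding of that field into the fraction field of $V$, and then the PID lattice argument to land inside $GL_n(V)$. The only cosmetic differences are that the paper uses $\mathbb{Q}(\zeta_m)$ rather than $\mathbb{Q}(\zeta_e)$, embeds directly into the fraction field $K$ of $V$ rather than building the intermediate DVR $D$, and cites \cite[73.6]{CR} for the lattice step instead of writing out the averaging explicitly.
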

\begin{proof}
Let $\zeta_m$ denote a primitive $m^{th}$-root of unity in $\mathbb{C}$.
By a result due to Brauer $L = \mathbb{Q}(\zeta_m)$ is a splitting field of $H$, (see \cite{BR}, also see \cite[41.1, p.\ 292]{CR}). It follows that $H$ is conjugate to a subgroup $H^\prime$ which is contained in $GL_n(L)$.

As $k$ is algebraically closed we have a primitive $m^{th}$ root of unity in $k$, say $t$. As  $p \nmid m$ it follows that $1, t, t^2, \ldots, t^{m-1}$ are all distinct. By Hensel's Lemma there exists $\theta \in V$ with $\ov{\theta} = t $ and $\theta^m = 1$.

Let $K =$ quotient field of $V$. As $V$ has a primitive $m^{th}$-root of unity and as
$\mathbb{Q} \subseteq K$ we have an embedding $L \rt K$. Thus $H^\prime$ is isomorphic to $G^\prime$ for some $G^\prime \subseteq GL_n(K)$.
As $V$ is a P.I.D we get that $G^\prime $ is conjugate to a group $G \subseteq GL_n(V)$, see \cite[73.6, p.\ 496]{CR}.

Each step of our construction preserves the determinant. So if $H \subseteq SL_n(\mathbb{C})$ then $G \subseteq SL_n(V)$.
\end{proof}
\section{Construction of a few functors}\label{c-f}
In this section we define a few functors which are analogous to
those defined by Auslander in \cite{Aus-1}. Let $(V,\pi)$ be a complete
DVR of characteristic zero such that $V/(\pi) = k$ is an
algebraically closed field of characteristic $p
> 0$. Let $G \subseteq GL_2(V)$ be a finite group with $p \nmid
|G|$. Let $S = V[[x_1,x_2]]$ and let $G$ act linearly on $S$. Let
$S^G$ be the ring of invariants of $S$ \wrt \ $G$. Let $f \in (x_1,x_2)^2\cap S^G$.
Assume $f \notin \pi S$. Let $R = S/(\pi-f)$. Note that $R$ is regular
local of dimension $2$ and $G$ acts on $R$. Let the ring of
invariants be $R^G$. By \ref{mod-reg} $R^G = S^G/(\pi-f)$. Let $A =
S\otimes_V k = k[[X_1,X_2]]$. Then we have a natural $G$-action on
$A$. Let $A^G$ be the ring of invariants of $A$ \wrt \ $G$.

\s Let $S*G$ be the skew-group ring of $S$ with respect to $G$.
Recall $S*G = \{ \sum_{\sigma \in G} a_{\sigma}\sigma \mid
a_{\sigma} \in S \}$ with multiplication defined by
\[
a_1\sigma_1\cdot a_2 \sigma_2 = a_1\sigma_1(a_2)\sigma_1\sigma_2.
\]
An $S*G$ module $M$ is precisely an $S$-module $M$ on which $G$ acts
such that $\sigma(a m) = \sigma(a)\sigma(m)$ for all $a \in S$ and
$m \in M$.

 As $|G|$ is invertible in $S$ taking invariants is an exact
 functor. It follows that an $S*G$ module
 $M$ is projective as an $S*G$-module if and only if it is
 projective as a $S$-module,( the proof in \cite[Lemma 1.1]{Aus-1} generalizes).

 \s If $T$ is a ring then we let $\PP(T)$ denote the category of
finitely generated projective left $T$-modules. Let $V[G]$ denote
the group ring over $V$.  Our first functor is
\begin{align*}
F \colon \PP(V[G]) &\rt \PP(S*G) \\
     W &\rt S\otimes_V W \\
     f &\rt 1_S \otimes f.
\end{align*}
Here $f$ is a morphism between two projective $V[G]$-modules. We
first note that $S\otimes_V W$ is a $S*G$-module. Clearly it is a
$S$-module. We define a $G$-action as follows: $\sigma( s\otimes w)
= \sigma(s) \otimes \sigma(w)$. Note that for $a \in S $ and $m =
s\otimes w \in S\otimes_V W$ we have
\[
\sigma(a m) = \sigma( as \otimes w) = \sigma(as)\otimes \sigma(w) =
\sigma(a)\sigma(s)\otimes \sigma(w) = \sigma(a)\sigma(m).
\]
Thus $S\otimes W$ is a $S*G$-module. As $W$ is a projective
$V[G]$-module it is free as a $V$-module. So $S\otimes_V W$ is free
as a $S$-module. It follows that $S\otimes_V W$ is a projective
$S*G$-module.

\s Let $\ov{F} \colon \PP(k[G]) \rt \PP(A*G)$ be the functor
$A\otimes_k -$ defined analogously as before. Note that as $k[G]$ is
semi-simple we have $\PP(k[G])$ is the category of all finitely
generated $k[G]$-modules. 

\s We first note that $V \subseteq Z(V[G])$, the center of $V[G]$. So we have an isomorphism of rings $k \otimes_V V[G] \cong k[G]$. We have a natural functor $k\otimes_V -
\colon \PP(V[G]) \rt \PP(k[G])$. To see this note that if $P$ is a finitely generated  $V[G]$-module then $k\otimes_V P$ is a finitely generated $k[G]$-module. It is also projective as $k[G]$ is semi-simple.

\s We also note that $V \subseteq Z(S*G)$, the center of $S*G$. So  we have an isomorphism of rings
$k \otimes_V S*G \cong A*G$. We have a natural functor $k\otimes_V - \colon \PP(S*G) \rt \PP(A*G)$.
To see this note that  if $M$ is a $S*G$-module then clearly $k\otimes_V M$ is an 
$A*G$-module.
  If $P$ is projective as $S*G$-module then it is free as a $S$-module. Hence $k\otimes_V P$ is free as an $A$-module. It  follows that $k \otimes_V P$ is a projective $A*G$-module.

\s We have a commutative diagram of functors
\[
\xymatrix
{
& \PP(V[G])
\ar@{->}[r]^{k\otimes_V-}
\ar@{->}[d]^{F}
& \PP(k[G])
\ar@{->}[d]^{\ov{F}}
\\
& \PP(S*G)
\ar@{->}[r]^{k\otimes_V-}
& \PP(A*G).
}
\]

\s Let $P$ be a projective $S*G$-module. Then $P^G$ is a $S^G$-direct summand of $P$. Also note that as $P$ is free as a $S$-module, we have that $P^G \in \add_{S^G} S$.
If $f \colon P_1 \rt P_2$ is a morphism in $\PP(S*G)$ then notice that we have a morphism $\widetilde{f} \colon P_1^G \rt P_2^G$ where $\widetilde{f}$ is the restriction map. Thus we have a functor 
\[
(-)^G \colon \PP(S*G) \rt \add_{S^G}(S).
\]
Similarly we have a functor
\[
(-)^G \colon \PP(A*G) \rt \add_{A^G}(A).
\]

\s Notice that $A^G = (S/(\pi))^G \cong S^G/(\pi)$. Thus $k \otimes_V S^G = A^G$. Furthermore it is clear that if $M \in \add_{S^G}(S)$ then $k\otimes_V M = M/\pi M \in \add_{A^G}(A)$. Thus we have a functor 
$$k\otimes_V- \colon \add_{S^G}(S) \rt \add_{A^G}(A). $$

\s We have a commutative diagram of functors
\[
\xymatrix
{
& \PP(S*G)
\ar@{->}[r]^{k\otimes_V-}
\ar@{->}[d]^{(-)^G}
& \PP(A*G)
\ar@{->}[d]^{(-)^G}
\\
& \add_{S^G}(S)
\ar@{->}[r]^{k\otimes_V-}
& \add_{A^G}(A).
}
\]

\s Notice $R^G = S^G/(\pi - f)$. Also note that $R = S/(\pi -f)$. Thus we have a functor
\[
R^G\otimes_{S^G} - \colon \add_{S^G}(S) \rt \add_{R^G}(R).
\]
By \ref{herzog}, we have that $\add_{R^G}(R) = CM(R^G)$ the category of all maximal \CM \ $R^G$-modules. Similarly $\add_{A^G}(A) = CM(A^G)$. As $\ov{f}$ is a non-zero divisor in $A^G$ we have a functor
\[
A^G/(\ov{f})\otimes_{A^G} - \colon CM(A^G) \rt CM(A^G/(\ov{f})).
\]

\s By \ref{natural-map} we have that $V \subseteq R$. Thus $V \subseteq R^G$. It follows that $\pi$ is a  non-zero element of $R^G$ and so a non-zero divisor as $R^G$ is a domain. Thus we have a functor
\[
k\otimes_V-\colon \add_{R^G}(R) \rt CM(A^G/(\ov{f})).
\] 

\s We have a commutative diagram of functors
\[
\xymatrix
{
& \add_{S^G}(S)
\ar@{->}[r]^{k\otimes_V-}
\ar@{->}[d]^{R^G\otimes_{S^G} -}
& \add_{A^G}(A)= CM(A^G)
\ar@{->}[d]^{A^G/(\ov{f})\otimes_{A^G} -}
\\
& \add_{R^G}(R) = CM(R^G)
\ar@{->}[r]^{k\otimes_V-}
& CM(A^G/(\ov{f}))
}
\]
\s\label{psi-def} We set $\psi \colon \PP(V[G]) \rt CM(R^G)$ to be the composite functor, i.e.,
\[
\psi = \left( R^G\otimes_{S^G}- \right) \circ  (-)^G   \circ   F. 
\]

\section{A property of the functor $\psi$}
Let $\psi \colon \PP(V[G]) \rt CM(R^G)$ be the functor as defined in the previous section. In this section we prove a crucial result of $\psi$. The hypotheses in this section is similar to the previous two sections. We need an additional hypotheses.
Recall $\sigma \in GL_2(k)$ is said to be a pseudo-reflection if $\rank(\sigma - 1) \leq 1$. Let $G$  be a finite subgroup of $GL_2(V)$ with $p \nmid |G|$. By \ref{inject-k}  we also have a natural inclusion $\eta \colon G\hookrightarrow GL_2(k)$. We say $G$ has no pseudo-reflections except the identity if $\eta(g)$ is not a psuedo-reflection for all $g \neq 1$. In this section we prove Theorem \ref{basic-int}. We restate it here for the convenience of the reader. 
\begin{theorem}\label{basic}(with hypotheses as above) Assume $G \subseteq GL_2(V)$ has no pseudo-reflection except the identity. Let $f \in (x_1,x_2)^l \setminus \pi S$. There  exists a positive integer $l_0(G)$ depending on $G$ such that if $l \geq l_0(G)$ then the functor $\psi \colon \PP(V[G]) \rt CM(R^G)$ has the following properties
\begin{enumerate}[\rm (1)]
\item
If $P$ is an indecomposable projective $V[G]$-module then $\psi(P)$ is an indecomposable maximal \CM \ $R^G$-module.
\item
$P_1 \cong P_2$ in $\PP(V[G])$ if and only if $\psi(P_1) \cong \psi(P_2)$ as $R^G$-modules.
\item
If $M$ is an indecomposable maximal \CM \ $R^G$-module then there exists an indecomposable projective $V[G]$-module $P$ with $\psi(P) \cong M$ as $R^G$-modules.
\end{enumerate}
\end{theorem}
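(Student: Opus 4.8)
The plan is to prove all three statements by reducing modulo $\pi$ to the equicharacteristic quotient singularity $A^G$, where Auslander's theorem \cite{Aus-1} applies, together with idempotent lifting along $V[G]\twoheadrightarrow k[G]$. First I would record the reduction. By \ref{complete} the ring $R^G$ is complete, hence Henselian, so $CM(R^G)$ is Krull--Schmidt; similarly $CM(A^G)$ and $CM(A^G/(\ov f))$ are. By \ref{natural-map} $V\sub R^G$, so $\pi$ is a nonzerodivisor on $R^G$ and on every $M\in CM(R^G)$ (over the two dimensional normal domain $R^G$ an MCM module has only the associated prime $(0)$); hence $M\mapsto M/\pi M$ is a functor $CM(R^G)\rt CM(R^G/\pi R^G)$, and since $R^G/\pi R^G = S^G/(\pi,f) = A^G/(\ov f)$ this is the functor $k\otimes_V-$ of section \ref{c-f}. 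Composing the three commutative squares of section \ref{c-f} gives a natural isomorphism $k\otimes_V\psi(P)\cong\ov\psi(k\otimes_V P)$, where $\ov\psi := \bigl(A^G/(\ov f)\otimes_{A^G}-\bigr)\circ(-)^G\circ\ov F$; and $\Phi := (-)^G\circ\ov F\colon\PP(k[G])\rt CM(A^G)$ is precisely Auslander's functor $W\mapsto(A\otimes_k W)^G$, so $\ov\psi$ is reduction modulo $\ov f$ composed with $\Phi$.

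Two standard facts feed into the argument. First, $V[G]$ is $\pi$-adically complete, so idempotents lift along $V[G]\twoheadrightarrow k[G]$; as $p\nmid|G|$, $k[G]$ is semisimple, and $-\otimes_V k\colon\PP(V[G])\rt\PP(k[G])$ therefore induces a bijection on isomorphism classes of indecomposables and detects isomorphism, and $V[G]\cong\bigoplus_\rho P_\rho^{\oplus\dim_k\rho}$ over the irreducible $k[G]$-modules $\rho$, each $P_\rho$ an indecomposable projective $V[G]$-module lifting $\rho$. In particular $\ov P := k\otimes_V P$ is indecomposable whenever $P$ is. Second, by \ref{inject-k} $\eta\colon G\hookrightarrow GL_2(k)$; since $G$ has no pseudo-reflection other than $1$, every nontrivial $\eta(\sigma)$ has $\rank(\eta(\sigma)-1) = 2$, so $G$ acts freely on $k^2\setminus\{0\}$ and $A^G$ is a two dimensional normal domain with an isolated singularity. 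By Auslander \cite{Aus-1}, $\Phi$ carries indecomposables to indecomposable MCM $A^G$-modules, detects isomorphism, and has every indecomposable MCM $A^G$-module in its essential image; there are only finitely many of these (also by \ref{herzog}), say $N_1,\dots,N_r$, and $\Ext^1_{A^G}(N_i,N_j)$ has finite length for all $i,j$ because $A^G$ has an isolated singularity.

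The technical heart is the effect of reduction modulo $\ov f$ on $N_1,\dots,N_r$, and it is here that the bound $l_0(G)$ is produced: since the $N_i$ depend only on $G$, one chooses $l_0(G)$ in terms of them. What must be shown is that for $\ov f\in(X_1,X_2)^l$ with $l\ge l_0(G)$, each $N_i/\ov f N_i$ is an indecomposable MCM $A^G/(\ov f)$-module and $N_i/\ov f N_i\cong N_j/\ov f N_j$ implies $N_i\cong N_j$. Here $\ov f$ is a nonzerodivisor on $A^G$ and on each $N_i$, and applying $\Hom_{A^G}(N_i,-)$ to $0\rt N_i\xrightarrow{\ov f}N_i\rt N_i/\ov f N_i\rt 0$ yields the exact sequence
\[
0\rt\operatorname{End}_{A^G}(N_i)/\ov f\operatorname{End}_{A^G}(N_i)\rt\operatorname{End}_{A^G/(\ov f)}(N_i/\ov f N_i)\rt\Ext^1_{A^G}(N_i,N_i)\xrightarrow{\ov f}\Ext^1_{A^G}(N_i,N_i),
\]
whose first term is local because $N_i$ is indecomposable over the complete local ring $A^G$. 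Since $\Ext^1_{A^G}(N_i,N_i)$ has finite length it is killed by a fixed power of the maximal ideal of $A^G$, hence by $\ov f$ once $l$ is large; the middle ring is then an extension of the local ring $\operatorname{End}_{A^G}(N_i)/\ov f\operatorname{End}_{A^G}(N_i)$ by $\Ext^1_{A^G}(N_i,N_i)$, and one checks that this $\Ext^1$-part lies in the Jacobson radical, so the middle ring stays local and $N_i/\ov f N_i$ is indecomposable. The reflection statement is obtained similarly: an isomorphism $N_i/\ov f N_i\xrightarrow{\sim}N_j/\ov f N_j$ lifts, modulo the (for $l$ large, negligible) obstruction in $\Ext^1_{A^G}(N_i,N_j)$, to a homomorphism $N_i\rt N_j$ which is surjective by Nakayama and hence an isomorphism. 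I expect this to be the main obstacle; the rest is formal.

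Granting it, the three properties follow. For (1): $P$ indecomposable $\Rightarrow\ov P$ indecomposable $\Rightarrow\Phi(\ov P)$ is an indecomposable MCM $A^G$-module $\Rightarrow\ov\psi(\ov P) = \Phi(\ov P)/\ov f\Phi(\ov P)$ is indecomposable; if $\psi(P) = M_1\oplus M_2$ with both $M_i\ne 0$, reducing modulo $\pi$ gives $\ov\psi(\ov P)\cong M_1/\pi M_1\oplus M_2/\pi M_2$ with $M_i/\pi M_i\ne 0$ by Nakayama, a contradiction; and $\psi(P)\in\add_{R^G}(R) = CM(R^G)$ by \ref{herzog}, so $\psi(P)$ is maximal \CM. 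For (2), ``$\Rightarrow$'' is functoriality; ``$\Leftarrow$'' follows by reducing $\psi(P_1)\cong\psi(P_2)$ modulo $\pi$ to $\Phi(\ov P_1)/\ov f\cong\Phi(\ov P_2)/\ov f$, then using the reflection statement above, then Auslander's theorem, then idempotent lifting, in turn. For (3): $F(V[G]) = S\otimes_V V[G]\cong S*G$ as $S*G$-modules and $(S*G)^G\cong S$ as $S^G$-modules, so $\psi(V[G])\cong R^G\otimes_{S^G}S = R$; applying $\psi$ to $V[G]\cong\bigoplus_\rho P_\rho^{\oplus\dim_k\rho}$ gives $R\cong\bigoplus_\rho\psi(P_\rho)^{\oplus\dim_k\rho}$; and since by \ref{herzog} every indecomposable MCM $R^G$-module is a direct summand of $R$, Krull--Schmidt together with (1) forces it to be isomorphic to $\psi(P_\rho)$ for some $\rho$.
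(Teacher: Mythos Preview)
Your overall architecture matches the paper's: pass from $\PP(V[G])$ to $\PP(k[G])$ via semiperfectness of $V[G]$, from there to $CM(A^G)$ via Auslander, compare $CM(R^G)$ with $CM(A^G/(\ov f))$ through $k\otimes_V-$, and glue using the commutative squares of section~\ref{c-f}; parts (1)--(3) are then deduced just as in the paper, including $(S*G)^G\cong S$ and Krull--Schmidt for (3). The only substantive divergence is in what you rightly flag as the main obstacle, and there your sketch has a real gap.

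The paper does not argue directly with the long exact $\Ext$ sequence. It invokes Yoshino's \emph{efficient systems of parameters} \cite[6.14--6.18]{Yoshino}: choose such a pair $(u_1,u_2)$ for $A^G$ and take $l_0(G)$ so that $(x_1,x_2)^{l_0(G)}\cap A^G\subseteq(u_1^2,u_2^2)$, hence $\ov f\in(u_1^2,u_2^2)$. Since $M\mapsto M/(u_1^2,u_2^2)M$ already reflects indecomposability and isomorphism on MCM $A^G$-modules by \cite[6.16, 6.18]{Yoshino}, and $M/(u_1^2,u_2^2)M$ is a further quotient of $M/\ov f M$, Nakayama gives the same for $M\mapsto M/\ov f M$. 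Your direct argument, by contrast, does not work as stated. The claim that ``the $\Ext^1$-part lies in the Jacobson radical, so the middle ring stays local'' fails in general: take $A=k[[t]]\hookrightarrow B=k[[t]]\times k$ via $r\mapsto(r,\ov r)$, a module-finite extension of $k[[t]]$-algebras with $A$ local and $B/A\cong k$ of finite length, yet $B$ not local. So knowing only that $\End_{A^G}(N_i)/\ov f$ is local with finite-length cokernel in $\End_{A^G/(\ov f)}(N_i/\ov f N_i)$ is not enough. Likewise ``negligible obstruction'' is too vague for isomorphism reflection: a given isomorphism modulo $\ov f$ need not lift even when $\ov f$ annihilates $\Ext^1$, and a genuine Maranda-type manoeuvre is required (this is in fact what underlies Yoshino's proof, so your instinct about finite-length $\Ext^1$ is correct, just not the execution). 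Replacing your sketch by the citation to \cite[6.16, 6.18]{Yoshino} makes the proof identical to the paper's.
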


\s\label{perfect} $V$ is a complete DVR. So the group ring $V[G]$ is semi-perfect, see \cite[23.3]{Lam}.
In particular the functor $k\otimes_V- \colon \PP(V[G]) \rt \PP(k[G])$ gives a one-to-one correspondence of indecomposable projective modules, see \cite[25.3]{Lam}.

\s \label{efficient sop} Let $A = k[[x_1,x_2]]$ and let $A^G$ be the ring of invariants of $A$ \wrt \ $G$. Then there exists an efficient system of parameters $(u_1,u_2)$ of $A^G$, for this notion see \cite[6.14]{Yoshino}. In particular we have that  a maximal \CM \ $A^G$-module $M$ is  indecomposable  if and only  if $M/(u_1^2,u_2^2)M$ is indecomposable, see \cite[6.16]{Yoshino}. Furthermore $M_1 \cong M_2$ if and only if $M_1/(u_1^2,u_2^2)M_1 \cong M_2/(u_1^2,u_2^2)M_2$; see \cite[6.18]{Yoshino}.

Let $\n$ be the maximal ideal of $A^G$. Assume $\n^m \subseteq (u_1^2,u_2^2)$. By \ref{top} there exists $l$ such that $(x_1,x_2)^l\cap A^G \subseteq \n^m$. We define $l_0(G; u_1,u_2)$ to be the smallest integer $l$ with $(x_1,x_2)^{l}\cap A^G  \subseteq (u_1^2,u_2^2)$. Define
\[
l_0(G) = \min \{ l_0(G; u_1,u_2) \mid u_1, u_2 \ \text{is an efficient system of parameters of $A^G$} \}.
\]

\s \label{Aus} We recall some results of Auslander, \cite{Aus-1}. The functor $\ov{F} \colon \PP(k[G]) \rt \PP(A*G)$ gives an one-to-one correspondence between indecomposable objects. If $G$ has no pseudo-reflections except the identity then $(-)^G \colon \PP(A*G) \rt \add_{A^G}(A)$ is an equivalence of categories. 

We now give
\begin{proof}[Proof of Theorem \ref{basic}]
We take $l_0 = l_0(G)$, the invariant of $G$ defined in \ref{efficient sop}. Assume
$f \in (x_1,x_2)^l\cap S^G$ where $l \geq l_0$. Let $\ov{f}$ be the image of $f$ in $A^G$. By construction there exists an efficient system of parameters $u_1, u_2$ of $A^G$ such that $\ov{f} \in (u_1^2,u_2^2)$.

(1) Let $P$ be an indecomposable projective $V[G]$-module. Then $\ov{P} \in \PP(k[G])$ is indecomposable.
So $L = \ov{F}(\ov{P}) \in \PP(A*G)$ is indecomposable. It follows that $L^G$ is an indecomposable maximal \CM \ $A^G$-module. Note that $\ov{f} \in (u_1^2,u_2^2)$. So $L^G/\ov{f}L^G$ is an indecomposable maximal \CM \ $A^G/(\ov{f})$-module. By the commutaivity of the functors we constructed in the previous section we get that
 $k\otimes_V\psi(P) = L^G/\ov{f}L^G$. It follows that $\psi(P)$ is indecomposable. 

(2) If $P_1 \cong P_2$ then $\psi(P_1) \cong \psi(P_2)$. Conversely assume that $\psi(P_1)\cong \psi(P_2)$. Then $k\otimes_V\psi(P_1) \cong k\otimes_V\psi(P_2)$ as $A^G/(\ov{f})$-modules. Let 
$$M_i =  \left(\ov{F}(\ov{P_i}) \right)^G \quad \text{for} \ i = 1,2; $$
be objects in $CM(A^G)$. By the commutaivity of the functors we constructed in the previous section we get that $M_1/\ov{f}M_1 \cong M_2/\ov{f}M_2$. As $\ov{f} \in (u_1^2,u_2^2)$ we get that $M_1 \cong M_2$. By \ref{Aus} we get that $\ov{P_1} \cong \ov{P_2}$ as $k[G]$-modules. By \ref{perfect} we get that $P_1 \cong P_2$.

(3) In \cite[10.9]{Yoshino} it is proved that $(A*G)^G \cong A$ as $A^G$-modules. The same proof yields that $(S*G)^G \cong S$ as $S^G$-modules. Let $V[G] = P_1 \oplus P_2 \oplus \cdots \oplus P_m$ where $P_i$ are indecomposable projective $V[G]$-modules. Notice $F(V[G]) = S*G$. It follows that $\psi(V[G]) \cong R$ as $R^G$-modules. Thus
$$R = \psi(P_1)\oplus \psi(P_2)\oplus \cdots \oplus \psi(P_m). $$
By (1) we get that each  $\psi(P_i)$ is an indecompsable maximal \CM \ $R^G$-module. By the proof of \ref{herzog} we get that $M$ is isomorphic to $\psi(P_i)$ for some $i$. 
\end{proof}
\section{Preliminaries to construct AR sequences}
We assume $f \in (x_1,x_2)^l\cap S^G$ with $l \geq l_0(G)$. Notice we are assuming that $G$ has no pseudo-reflections. 
We need several preliminary results to construct AR-sequences on $R^G$.

We begin with following well-known fact for which I do not have a reference.

\begin{proposition}\label{ht-1}
Let $G \subseteq GL_2(V)$ be a finite group with $p \nmid |G|$. Assume $G$ has no pseudo-reflections. Let $G$ act linearly on $S = V[[x_1,x_2]]$. Then height one primes in $S^G$ are unramified in $S$.
\end{proposition}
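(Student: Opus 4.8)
The plan is to reduce the mixed-characteristic statement to the equicharacteristic-$p$ situation, where the analogous fact (that for a finite group acting on $k[[x_1,x_2]]$ with no pseudo-reflections, height one primes of the invariant ring are unramified) is classical, essentially by the purity of the branch locus together with the fact that when $G$ has no pseudo-reflections the ramification locus of $S \to S^G$ has codimension $\geq 2$. Concretely, I would argue directly with the inertia groups. Let $Q$ be a height one prime of $S^G$ and choose a prime $P$ of $S$ lying over $Q$; since $S$ is a two-dimensional regular (hence normal) domain and $S^G$ is normal with $S$ integral over it, $S_P / S^G_Q$ is an extension of discrete valuation rings (after localizing, $P$ has height one in $S$ as well, because $S$ is catenary and the extension is integral of the same dimension). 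The inertia group $I = I(P|Q)$ is the kernel of the action of the decomposition group on the residue field extension $\kappa(P)/\kappa(Q)$; unramifiedness at $Q$ is equivalent to $I = \{1\}$ (using that $p \nmid |G|$, so there is no wild ramification and $e = |I|$).

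The key step is then to show $I = \{1\}$. Suppose $\sigma \in I$, $\sigma \neq 1$. Because $S$ is regular local of dimension two, localizing at $P$ and completing, $S_P$ is a regular one-dimensional local ring; the condition $\sigma \in I$ says $\sigma$ acts trivially on $S/P$ and hence, after completion, $\sigma(z) \equiv z \pmod{P}$ for a uniformizer — more usefully, I would instead work globally in $S$: for $\sigma \in I$ one has $\sigma(s) - s \in P$ for all $s \in S$. Writing this for $s = x_1, x_2, \pi$ and using that $\eta(\sigma) \in GL_2(k)$ is not a pseudo-reflection (by hypothesis, and $\eta$ is injective by \ref{inject-k}), I want to conclude $P$ must have height $\geq 2$, a contradiction. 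The cleanest route: consider the $S$-submodule of $S$ generated by $\{\sigma(s) - s : s \in S\}$; this is an ideal $\mathcal{J}_\sigma$ contained in $P$, and it is generated by $\sigma(x_1)-x_1$, $\sigma(x_2)-x_2$ (and $\sigma(\pi)-\pi = 0$) together with products, hence by two elements whose images in $\m/\m^2$ span a subspace of dimension $\operatorname{rank}(\eta(\sigma) - 1)$ on the $x$-coordinates. Since $\eta(\sigma)$ is not a pseudo-reflection, $\operatorname{rank}(\eta(\sigma)-1) = 2$, so $\mathcal{J}_\sigma$ contains two elements that are part of a regular system of parameters of $S$, whence $\sqrt{\mathcal{J}_\sigma} = \m$ and $\operatorname{ht} \mathcal{J}_\sigma = 2$. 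This forces $\operatorname{ht} P \geq 2$, contradicting $\operatorname{ht} P = 1$. Therefore $I = \{1\}$ and $Q$ is unramified.

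The main obstacle I anticipate is the passage from "$\sigma \in I(P|Q)$" to "$\sigma(s) - s \in P$ for all $s$" together with controlling ramification indices in this complete, possibly-imperfect-residue-field-free setting: one must be careful that $e(P|Q)$ really equals $|I(P|Q)|$ (tame ramification, valid since $p \nmid |G|$) and that "unramified" for the height one prime $Q$ is detected by the inertia groups at all primes over it — these are standard facts about decomposition/inertia groups in integral extensions of normal domains (e.g.\ as in Bourbaki, \emph{Commutative Algebra}, Ch.\ V), but I would need to check they apply verbatim when $S^G$ is not known a priori to be excellent/Japanese in an elementary way. The linear-algebra input (pseudo-reflection $\Leftrightarrow$ $\operatorname{rank}(\eta(\sigma)-1) \leq 1$) is exactly what rules out codimension-one ramification, and everything else is routine.
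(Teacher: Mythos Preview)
Your argument is correct and lands at the same endpoint as the paper (triviality of the inertia group at each height-one prime, citing Serre for the equivalence with unramifiedness), but the mechanism is different. The paper writes the height-one prime of $S$ as $(z)$ and then does a case split on whether $z \in \n^2$ or not (and a further subcase on the image of the linear part in $A=k[[x_1,x_2]]$), in each case reducing mod $\pi$ and mod $\m^2$ to force $\eta(\sigma)$ either to be the identity or a pseudo-reflection. You instead bypass the case analysis by computing the ideal $\mathcal{J}_\sigma=(\sigma(x_1)-x_1,\ \sigma(x_2)-x_2)$ directly: since $G$ acts linearly, these generators are $V$-linear in $x_1,x_2$, and their images in $\m/\m^2$ span a space of dimension $\rank(\eta(\sigma)-1)=2$, so $\mathcal{J}_\sigma$ has height $2$ and cannot sit inside a height-one prime. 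Your route is arguably cleaner and makes the role of ``no pseudo-reflections'' more transparent; the paper's case split is more hands-on but does not need to think about the ideal $\mathcal{J}_\sigma$ at all.

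One small slip: in a three-dimensional regular local ring, two elements that are part of a regular system of parameters give $\operatorname{ht}\mathcal{J}_\sigma=2$, not $\sqrt{\mathcal{J}_\sigma}=\m$; drop the radical claim and keep only the height statement, which is exactly what you use. Your worries about $e=|I|$ and the inertia-group characterization of unramifiedness are handled by the tameness hypothesis $p\nmid |G|$ together with the standard references (the paper cites Serre, \emph{Local Fields}, Ch.~I, Props.~20--21).
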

\begin{proof}
Let $\bF$ be a height one prime in $S$. Put $\q = \bF \cap S^G$.  As $S^G$ is normal we have that $S_\q$ is a DVR. Set
\begin{align*}
T_\bF &= \{ \sigma \in G \mid \sigma(a) - a \in \bF \ \text{for all} \ a \in S \}, \text{and} \\
T_{\bF S_\q} &= \{ \sigma \in G \mid \sigma(x) - x \in \bF S_\q \ \text{for all} \ x \in S_\q \}.
\end{align*}

To show that $\q$ is unramified in $S$ it suffices to 
 prove $\sharp T_{\bF S_\q} = 1$, see \cite[Chapter 1, Propositions 20,21]{Serre}.
It can be easily shown that  
\[
T_\bF = T_{\bF S_\q}.
\]
So we prove $\sharp T_{\bF } = 1$.

$S$ is a unique factorization domain. So $\bF = (z)$. Let $\n = (\pi, x_1, x_2)$ be the maximal ideal of $S$. Let $A = S/(\pi) = k[[x_1,x_2]]$. Let $\m$ be the maximal ideal of $A$.
We consider two cases.

Case 1: $z \in \n^2$.\\
Then as $\sigma$ acts trivially on $S/\bF S$ we get that $\sigma$ acts trivially on $A/\m^2 A$. So $\sigma$ acts trivially on $\m/\m^2 = kx_1\oplus kx_2$. By \ref{inject-k} it follows that $\sigma = 1$.

Case 2. $z \in \n \setminus \n^2$. \\ 
Say  $z = a_0 \pi + a_1 x_1 + a_2x_2$. Put $u = a_1x_1 + a_2x_2$. Set $\ov{u}$ to be the image of $u$ in $A$. We again consider the following two subcases.

Subcase 1. $\ov{u} \in \m^2$. \\
As $\sigma$ acts trivially on $S/\bF S$, it acts trivially on $S/(\bF, \pi) = A/(\ov{u})$. As $\ov{u} \in \m^2$ we get that  $\sigma$ acts trivially on $A/\m^2 A$. So $\sigma$ acts trivially on $\m/\m^2 = kx_1\oplus kx_2$. By \ref{inject-k} it follows that $\sigma = 1$.

Subcase 2. $\ov{u} \in \m \setminus \m^2$. \\
As before $\sigma$ acts trivially on  $A/(\ov{u})$. So $\sigma - 1$ is null on $\m/(\ov{u} + \m^2)$. It follows that $\rank(\sigma - 1) \leq 1$ on $kx_1 \oplus k x_2$, i.e., $\sigma$ is a pseudo-reflection. By hypothesis $\sigma = 1$.
\end{proof}

\s Proposition \ref{ht-1} plays a significant role in all further analysis. The crucial point is a result of Auslander and Reiten which we now state. 
Let $E$ be a finitely generated  reflexive, left $S*G$-module. Then $E^G$  is a reflexive $S^G$-module, see \cite[Part 2, 1.1]{Aus-3}. Let $Ref(T)$ be the category of reflexive left modules over a ring $T$.  As height one primes in $S^G$ are unramified in $S$, the fixed point functor $(-)^G \colon mod(S*G) \rt mod(S^G)$ induces an equivalence of categories between $Ref(S*G)$ and $Ref(S^G)$, see \cite[Part 2, 1.3]{Aus-3}. 
If $P$ is a projective $S*G$-module then it is clearly a reflexive $S*G$-module. So we get the following result:
\begin{corollary}\label{consequence-ht1}(with hypotheses as above)
Let $P_1,P_2$ be projective $S*G$-modules. Let $M_i = P_i^G$ for $i = 1,2$. Then $\Hom_{S^G}(M_1, M_2)$ is in $\add_{S^G}(S)$ and so is a maximal \CM \ $S^G$-module.
\end{corollary}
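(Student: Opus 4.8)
The substantive input, Proposition \ref{ht-1} together with the Auslander--Reiten theorem \cite[Part 2, 1.3]{Aus-3}, is already in place, so the plan is purely to transport the computation of $\Hom_{S^G}(M_1,M_2)$ across the equivalence $(-)^G \colon Ref(S*G) \simeq Ref(S^G)$ into a computation over the skew group ring $S*G$, where the relevant modules are visibly built from $S$.

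\textbf{Step 1: identify $\Hom_{S^G}(M_1,M_2)$ with $\Hom_{S*G}(P_1,P_2)$ as $S^G$-modules.} First I would record that $S^G$ lies in the centre of $S*G$: for $a\in S^G$ and $\sigma\in G$ one has $a\sigma=\sigma\,\sigma^{-1}(a)=\sigma a$. Hence $\Hom_{S*G}(-,-)$ is naturally an $S^G$-module and the fixed-point functor $(-)^G\colon mod(S*G)\rt mod(S^G)$ is $S^G$-linear. Since $P_1,P_2$ are projective $S*G$-modules they are reflexive $S*G$-modules, and $M_i=P_i^G$ is a reflexive $S^G$-module by \cite[Part 2, 1.1]{Aus-3}; thus $P_1,P_2\in Ref(S*G)$ and $M_1,M_2\in Ref(S^G)$. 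By Proposition \ref{ht-1} height one primes of $S^G$ are unramified in $S$, so $(-)^G$ restricts to an equivalence $Ref(S*G)\simeq Ref(S^G)$, which being $S^G$-linear and fully faithful gives an isomorphism of $S^G$-modules
\[
\Hom_{S*G}(P_1,P_2)\ \cong\ \Hom_{S^G}(P_1^G,P_2^G)\ =\ \Hom_{S^G}(M_1,M_2).
\]

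\textbf{Step 2: $\Hom_{S*G}(P_1,P_2)\in\add_{S^G}(S)$.} Next I would observe that $S*G=\bigoplus_{\sigma\in G}S\sigma$ is $S$-free of rank $|G|$, and since left multiplication by $a\in S^G$ sends $s\sigma$ to $(as)\sigma$, one gets $S*G\cong S^{|G|}$ as $S^G$-modules; in particular $S*G\in\add_{S^G}(S)$. Writing $P_1\oplus Q_1\cong(S*G)^{m}$ and $P_2\oplus Q_2\cong(S*G)^{n}$ and using the $S^G$-linear isomorphism $\Hom_{S*G}(S*G,S*G)\cong S*G$ (via $f\mapsto f(1)$), we see $\Hom_{S*G}(P_1,P_2)$ is an $S^G$-direct summand of $(S*G)^{mn}\cong S^{|G|mn}$. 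As $\add_{S^G}(S)$ is closed under direct summands, $\Hom_{S*G}(P_1,P_2)\in\add_{S^G}(S)$, hence so is $\Hom_{S^G}(M_1,M_2)$ by Step 1. Finally, $S=V[[x_1,x_2]]$ is regular, module-finite over $S^G$, with $\dim S=\dim S^G$, so $\depth_{S^G}S=\depth_S S=\dim S^G$, i.e.\ $S$ is a maximal \CM\ $S^G$-module; a direct summand of a maximal \CM\ module over a \CM\ local ring is maximal \CM, so every object of $\add_{S^G}(S)$ is maximal \CM, and in particular so is $\Hom_{S^G}(M_1,M_2)$.

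\textbf{Main obstacle.} There is almost no obstacle left once Proposition \ref{ht-1} is granted; the only point demanding care is the bookkeeping in Step 1 — verifying that the Auslander--Reiten equivalence is genuinely $S^G$-linear (so that the abstract $\Hom$-isomorphism of a fully faithful functor upgrades to an isomorphism of $S^G$-modules) and that $\Hom$ computed in the full subcategory $Ref(S^G)$ agrees with $\Hom$ in $mod(S^G)$. Both follow immediately from the single observation that $S^G$ is central in $S*G$, so everything else is formal.
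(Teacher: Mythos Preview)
Your proof is correct and follows essentially the same strategy as the paper: both identify $\Hom_{S^G}(M_1,M_2)$ with $\Hom_{S*G}(P_1,P_2)$ via the Auslander--Reiten equivalence $Ref(S*G)\simeq Ref(S^G)$, and then show the latter lies in $\add_{S^G}(S)$. The only cosmetic difference is in this second step: the paper writes $\Hom_{S*G}(P_1,P_2)=\Hom_S(P_1,P_2)^G$ and observes that $\Hom_S(P_1,P_2)$ is $S$-free (hence $S*G$-projective, so its invariants lie in $\add_{S^G}(S)$), whereas you embed $\Hom_{S*G}(P_1,P_2)$ as an $S^G$-summand of $\Hom_{S*G}((S*G)^m,(S*G)^n)\cong (S*G)^{mn}\cong S^{|G|mn}$ --- both arguments are immediate and equivalent in spirit.
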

\begin{proof}
Note that $P_1,P_2$ are free $S$-modules. So $\Hom_S(P_1,P_2)$ is a free $S$-module. Thus $\Hom_S(P_1,P_2)$ is a projective $S*G$-module. Notice
\[
\Hom_S(P_1,P_2)^G = \Hom_{S*G}(P_1,P_2) = W.
\]
It follows that $W \in \add_{S^G}(S)$.  We also note that as $S^G$ is a normal \CM \ domain a maximal \CM \ module is reflexive.  As the fixed point functor gives an equivalence of categories between $Ref(S*G)$ and $Ref(S^G)$ we get that 
\[
W = \Hom_{S*G}(P_1,P_2) \cong \Hom_{S^G}(M_1, M_2).
\]
The result follows.
\end{proof}
A consequence of the above result is
\begin{proposition}\label{mod-hom}
Let $\theta = \pi-g$ where $g \in (x_1,x_2)^2\cap S^G$ (Note $g = 0$ is also considered). Set $B = S^G/(\theta)$. Let $L_1,L_2 \in \add_{S^G}(S)$. Then
$$ \Hom_B\left(\frac{L_1}{\theta L_1}, \frac{L_2}{\theta L_2}\right) \cong \frac{\Hom_{S^G}(L_1,L_2)}{\theta \Hom_{S^G}(L_1,L_2)}.$$
\end{proposition}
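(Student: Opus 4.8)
The plan is to write both sides as $G$-invariants of $\Hom$'s between projective modules over skew group rings, and then to invoke the Auslander--Reiten equivalence of reflexive-module categories in the ``reduced'' situation $S/\theta S$.

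Set $\theta = \pi - g$ and $R' = S/\theta S$. The image of $\theta$ in $S/(x_1,x_2)S = V$ is $\pi \neq 0$, so $\theta$ is a nonzero element of the domains $S$ and $S^G$, hence a nonzerodivisor on each; moreover $\theta \in \m \setminus \m^2$, so $R'$ is a two dimensional complete regular local ring. The group $G$ acts on $R'$ without pseudo-reflections (the action on $\q/\q^2$, with $\q$ the maximal ideal of $R'$, is again the inclusion of \ref{inject-k}), and by \ref{mod-reg} we have $(R')^G = S^G/\theta S^G = B$, which by \ref{herzog} is a normal \CM\ domain of dimension two. The proof of \ref{ht-1} carries over to $R'$ in place of $S$ (it is again a two dimensional regular local ring, hence a UFD, and the cases there only involve $\q/\q^2$), so height one primes of $B$ are unramified in $R'$; consequently $(-)^G$ induces an equivalence $Ref(R'*G) \simeq Ref(B)$.

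Both $\Hom_B(L_1/\theta L_1, L_2/\theta L_2)$ and $\Hom_{S^G}(L_1,L_2)/\theta\Hom_{S^G}(L_1,L_2)$ are additive in $L_1$ and in $L_2$, and every object of $\add_{S^G}(S)$ has the form $P^G$ for a projective $S*G$-module $P$ (decompose $S \cong (S*G)^G$ into indecomposables, or lift an idempotent along the isomorphism $\Hom_{S^G}(S^n,S^n) \cong \Hom_{S*G}((S*G)^n,(S*G)^n)$ given by the equivalence $Ref(S*G)\simeq Ref(S^G)$). So we may assume $L_i = P_i^G$ with $P_i$ projective over $S*G$, hence free over $S$. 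Put $W = \Hom_S(P_1,P_2)$, a free $S$-module carrying the conjugation $G$-action, so a projective $S*G$-module with $W^G = \Hom_{S*G}(P_1,P_2) \cong \Hom_{S^G}(L_1,L_2)$ by \ref{consequence-ht1}. Since $|G|$ is a unit in $S$, the functor $(-)^G$ is exact; applying it to $0 \to W \xrightarrow{\theta} W \to W/\theta W \to 0$ and to $0 \to P_i \xrightarrow{\theta} P_i \to P_i/\theta P_i \to 0$ (all modules torsion free over the domain $S$, so $\theta$ is a nonzerodivisor on them) gives $\Hom_{S^G}(L_1,L_2)/\theta\Hom_{S^G}(L_1,L_2) \cong (W/\theta W)^G$ and $L_i/\theta L_i \cong (P_i/\theta P_i)^G$. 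Write $\ov{P_i} = P_i/\theta P_i = P_i \otimes_S R'$, a projective $R'*G$-module.

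Since $P_1$ is finitely generated and free over $S$, base change of $\Hom$ yields a $G$-equivariant isomorphism $W/\theta W = \Hom_S(P_1,P_2)\otimes_S R' \cong \Hom_{R'}(\ov{P_1},\ov{P_2})$, whence $(W/\theta W)^G \cong \Hom_{R'}(\ov{P_1},\ov{P_2})^G = \Hom_{R'*G}(\ov{P_1},\ov{P_2})$. Finally $\ov{P_1},\ov{P_2}$ are projective, hence reflexive, over $R'*G$, so the equivalence $Ref(R'*G)\simeq Ref(B)$ gives $\Hom_{R'*G}(\ov{P_1},\ov{P_2}) \cong \Hom_B(\ov{P_1}^G,\ov{P_2}^G) = \Hom_B(L_1/\theta L_1, L_2/\theta L_2)$. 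Composing the displayed isomorphisms proves the proposition. The one step that is more than bookkeeping with the exact functor $(-)^G$ is checking that \ref{ht-1}, and with it the Auslander--Reiten equivalence, transfers from $S$ to $R' = S/\theta S$; that is where I would spend the most care.
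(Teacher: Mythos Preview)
Your argument is correct and takes a genuinely different route from the paper. The paper works entirely on the commutative side: from $0\to L_2\xrightarrow{\theta}L_2\to L_2/\theta L_2\to 0$ it extracts the long exact sequence in $\Hom/\Ext$, and the statement reduces to showing that $\theta$ is a nonzerodivisor on $\Ext^1_{S^G}(L_1,L_2)$. This is done by a depth--localization--Nakayama argument, the key inputs being that $\Hom_{S^G}(L_1,L_2)$ is maximal \CM\ (Corollary~\ref{consequence-ht1}) and that $B$ is an isolated singularity (since it has finite CM type). You instead lift $L_i$ to $P_i^G$ with $P_i$ projective over $S*G$, reduce to the base-change identity $\Hom_S(P_1,P_2)\otimes_S R'\cong\Hom_{R'}(\ov{P_1},\ov{P_2})$ (trivial since $P_1$ is $S$-free), and then invoke the Auslander--Reiten equivalence a second time, now for $R'=S/\theta S$ and $B=(R')^G$. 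The trade-off: the paper uses the equivalence only over $S$ (via \ref{consequence-ht1}) plus a somewhat delicate $\Ext$ computation; you avoid the $\Ext$ argument entirely but must verify unramifiedness of height-one primes of $B$ in $R'$, i.e.\ the two-dimensional analogue of \ref{ht-1}. As you note, that is actually easier than \ref{ht-1} itself (one fewer case split, and it is exactly the classical equicharacteristic hypothesis when $g=0$). Tracing your chain of isomorphisms shows it agrees with the natural reduction-mod-$\theta$ map, so you recover the same isomorphism the paper produces.
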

\begin{proof}
We note that $B \cong (S/\theta)^G$ has finite representation type and so in particular is an isolated singularity, \cite{Aus-2} (also see \cite[4.22]{Yoshino}, \cite[Corollary 2]{HL}). As $\theta \neq 0$ it is $S^G$-regular. 
As $L_2$ is maximal \CM \ $S^G$-module, we get that $\theta$ is also $L_2$ regular.
We have an exact sequence $0 \rt L_2 \xrightarrow{\theta} L_2 \rt L_2/\theta L_2 \rt 0$. So we have an exact sequence
\begin{align*}
0 \rt &\Hom_{S^G}(L_1,L_2) \xrightarrow{\theta} \Hom_{S^G}(L_1,L_2) \rt \Hom_B(L_1/\theta L_1, L_2/\theta L_2) \\
&\Ext^1_{S^G}(L_1,L_2)    \xrightarrow{\theta} \Ext^1_{S^G}(L_1,L_2) \rt \Ext^1_B(L_1/\theta L_1, L_2/\theta L_2)
\end{align*}
Let $K = \ker( \Ext^1_{S^G}(L_1,L_2)    \xrightarrow{\theta} \Ext^1_{S^G}(L_1,L_2))$. By \ref{consequence-ht1} we get that $\Hom_{S^G}(L_1,L_2)$ is maximal \CM \ $S^G$-module. So it has $\depth = 3$. Also clearly
$\depth \Hom_B(L_1/\theta L_1, L_2/\theta L_2)  = 2$. It follows that $\depth K \geq 1$. Let $\m$ be the maximal ideal of $S^G$. We get that $\m \notin \Ass K$.

To prove our result it is sufficient to show that $\theta$ is a non-zero divisor of \\ $\Ext_{S^G}^1(L_1,L_2)$. Suppose if possible this is not true. Then $\theta \in \q$ for some $\q \in \Ass_{S^G} \Ext^1_{S^G}(L_1,L_2)$. Say
$\q = (0 \colon v)$ for some $v \in \Ext^1_{S^G}(L_1,L_2)$. So $\theta v = 0$. Thus $v \in K$. It follows that
$\q \in \Ass K$. So $\q \neq \m$. 

As $B$ is an isolated singularity we have that $\Ext^1_B(L_1/\theta L_1, L_2/\theta L_2)$ is a module of finite length. Localizing the above exact sequence at $\q$ we get
\[
0 \rt K_\q \rt \Ext^1_{S^G}(L_1,L_2)_\q    \xrightarrow{\theta} \Ext^1_{S^G}(L_1,L_2))_\q \rt 0.
\]
So $ \Ext^1_{S^G}(L_1,L_2))_\q =  \theta\Ext^1_{S^G}(L_1,L_2))_\q$. By Nakayama Lemma we get that \\ $ \Ext^1_{S^G}(L_1,L_2))_\q = 0$. So $K_\q = 0$, a contradiction. 
\end{proof}
A consequence of the above result is the following:
\begin{theorem}\label{ext-S}
(with hypotheses as above) Let $M_1, M_2$ be maximal CM $R^G$-modules and let
$\phi \colon M_1 \rt M_2 $ be a non-split epimorphism.
Then
\begin{enumerate}[\rm (1)]
\item
For $i = 1, 2$,
there exists $\widetilde{M_i} \in \add_{S^G}(S)$ with $\widetilde{M_i}\otimes_{S^G}R^G \cong M_i$.
\item
There exists $\widetilde{\phi} \in \Hom_{S^G}(\widetilde{M_1}, \widetilde{M_2})$ with
$\widetilde{\phi}\otimes_{S^G}R^G = \phi$.
\item
$\widetilde{\phi}$ is not a split epimorphism.
\item
For $i = 1,2$,
$M_i^* = \widetilde{M_i}/\pi \widetilde{M_i}$ are maximal \CM \ $A^G$-modules.
(Recall $A = k[[x_1,x_2]]$).
\item
Define $\phi^* = \widetilde{\phi}\otimes_{S^G}A^G.$ Then $\phi^* \colon M_1^* \rt M_2^*$
is not a split epi.
\end{enumerate}
\end{theorem}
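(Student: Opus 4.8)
The plan is to obtain (1)--(5) by lifting data along the two surjections $S^G \twoheadrightarrow R^G = S^G/(\pi-f)$ and $S^G \twoheadrightarrow A^G = S^G/(\pi)$, with Proposition \ref{mod-hom} as the main tool. Observe that both $\pi-f$ and $\pi$ have the form $\pi - g$ with $g \in (x_1,x_2)^2\cap S^G$, which is exactly what \ref{mod-hom} permits (recall $f \in (x_1,x_2)^2\cap S^G$, and take $g=0$ for $\pi$). I will also use repeatedly that for $L \in \add_{S^G}(S)$ the ring $\operatorname{End}_{S^G}(L)$ is module-finite over the complete local ring $S^G$, hence is semiperfect and $\n$-adically complete; therefore $\n\operatorname{End}_{S^G}(L)$, and in particular both $(\pi-f)\operatorname{End}_{S^G}(L)$ and $\pi\operatorname{End}_{S^G}(L)$, lies inside $\operatorname{rad}(\operatorname{End}_{S^G}(L))$.

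For (1) and (2): by \ref{herzog}, $M_i \in CM(R^G) = \add_{R^G}(R)$, so $M_i$ is the image of an idempotent $e_i \in \operatorname{End}_{R^G}(R^{n_i})$ for some $n_i$. Since $R^{n_i} = S^{n_i}/(\pi-f)S^{n_i}$, Proposition \ref{mod-hom} with $L_1 = L_2 = S^{n_i}$ and $\theta = \pi-f$ shows the natural map $\operatorname{End}_{S^G}(S^{n_i}) \rt \operatorname{End}_{R^G}(R^{n_i})$ is surjective with kernel $(\pi-f)\operatorname{End}_{S^G}(S^{n_i}) \sub \operatorname{rad}(\operatorname{End}_{S^G}(S^{n_i}))$; since idempotents lift modulo an ideal contained in the radical of a semiperfect ring, I pick an idempotent $\wt{e_i}$ lifting $e_i$ and set $\wt{M_i} = \wt{e_i}(S^{n_i}) \in \add_{S^G}(S)$, obtaining $\wt{M_i}\otimes_{S^G}R^G \cong M_i$. (Alternatively: using Theorem \ref{basic} (3) on the indecomposable summands of $M_i$, choose a projective $V[G]$-module $Q$ with $\psi(Q) \cong M_i$ and take $\wt{M_i} = (F(Q))^G \in \add_{S^G}(S)$, so that $\wt{M_i}\otimes_{S^G}R^G = \psi(Q) \cong M_i$.) For (2), Proposition \ref{mod-hom} with $L_1 = \wt{M_1}$, $L_2 = \wt{M_2}$, $\theta = \pi-f$ gives that $\Hom_{S^G}(\wt{M_1},\wt{M_2}) \rt \Hom_{R^G}(M_1,M_2)$ is surjective, so I choose $\wt\phi$ reducing to $\phi$.

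Part (3) is formal: $-\otimes_{S^G}R^G$ carries split epimorphisms to split epimorphisms, so if $\wt\phi$ were a split epi then $\phi = \wt\phi\otimes_{S^G}R^G$ would be too, against hypothesis. For (4): since $\wt{M_i}\in\add_{S^G}(S)$, we have $M_i^* = \wt{M_i}/\pi\wt{M_i}\in\add_{A^G}(A)$ (as recorded in Section \ref{c-f}), and $\add_{A^G}(A) = CM(A^G)$ by \ref{herzog} applied to $A$; hence $M_i^*$ is maximal \CM \ over $A^G$.

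The substance is in (5), which I expect to be the main obstacle. Suppose for contradiction that $\phi^*$ is a split epimorphism, with section $s^*\colon M_2^* \rt M_1^*$, $\phi^* s^* = 1_{M_2^*}$. Proposition \ref{mod-hom} with $L_1 = \wt{M_2}$, $L_2 = \wt{M_1}$, $\theta = \pi$ shows $\Hom_{S^G}(\wt{M_2},\wt{M_1}) \rt \Hom_{A^G}(M_2^*,M_1^*)$ is surjective; lift $s^*$ to $s\colon \wt{M_2}\rt\wt{M_1}$. Then $\wt\phi\, s \in \operatorname{End}_{S^G}(\wt{M_2})$ reduces modulo $\pi$ to $1_{M_2^*}$, so $\wt\phi\, s - 1_{\wt{M_2}} \in \pi\operatorname{End}_{S^G}(\wt{M_2}) \sub \operatorname{rad}(\operatorname{End}_{S^G}(\wt{M_2}))$, and hence $\wt\phi\, s$ is a unit of $\operatorname{End}_{S^G}(\wt{M_2})$; composing $\wt\phi$ with $s(\wt\phi\, s)^{-1}$ then exhibits $\wt\phi$ as a split epimorphism, contradicting (3). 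The essential point making this work is that $\pi \in \n$, so $\pi$ acts through the Jacobson radical of every $\operatorname{End}_{S^G}(\wt{M_i})$, which is precisely what lets a splitting existing only after reduction modulo $\pi$ be corrected by a unit into a genuine splitting of $\wt\phi$.
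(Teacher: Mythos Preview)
Your proof is correct. Parts (1)--(4) are essentially the paper's arguments (your idempotent-lifting version of (1) is a mild variant; the alternative you mention via Theorem~\ref{basic}(3) is exactly what the paper does). The real divergence is in (5).

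For (5) the paper argues much more indirectly. It first shows (from $\phi^*$ split epi) that $\widetilde\phi$ is surjective with MCM kernel $K$, identifies $K^*=K/\pi K$ with $(\ov{F}(D^*))^G$ for a $k[G]$-module $D^*$ via Auslander's equivalence, lifts $D^*$ to a projective $V[G]$-module $D$, and then compares $\ov K = K\otimes_{S^G}R^G$ with $\psi(D)$ using the hypothesis $l\ge l_0(G)$ and the efficient-parameter argument. This yields $M_1\cong M_2\oplus\ov K$, and Miyata's theorem is invoked to conclude that $0\to\ov K\to M_1\xrightarrow{\phi}M_2\to 0$ actually splits, the desired contradiction.

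Your route is shorter and more robust: you lift the section $s^*$ to $s$ via Proposition~\ref{mod-hom} (with $\theta=\pi$) and observe that $\widetilde\phi\,s \equiv 1 \pmod{\pi}$ forces $\widetilde\phi\,s$ to be a unit in $\End_{S^G}(\widetilde{M_2})$, since $\pi\in\n$ and $\End_{S^G}(\widetilde{M_2})$ is module-finite over the complete local ring $S^G$. This bypasses Miyata's theorem, Auslander's classification of MCM $A^G$-modules, and---notably---never uses $l\ge l_0(G)$; it needs only that $G$ has no pseudo-reflections (through Proposition~\ref{mod-hom}). The paper's approach, by contrast, keeps the argument inside the representation-theoretic framework set up in Section~\ref{c-f} and Theorem~\ref{basic}, which is thematically consistent but heavier here.
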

\begin{proof}
(1) Let $P_0,P_1,\ldots, P_m$ be the complete list of non-isomorphic indecomposable projective $V[G]$-modules. Set $L_j = \psi(P_j)$ for $j = 0,\ldots,m$.
Then by \ref{basic},  $L_0,L_1,\cdots,L_m$ are all the non-isomorphic, indecomposable maximal \CM \ $R^G$-modules. So there exists integers $a_{i,j} \geq 0$ such that
\[
M_i \cong \bigoplus_{j=0}^m L_j^{a_{i,j}}, \quad \text{for} \ i = 1,2.
\]
 Define the following projective
$V[G]$-modules
\[
Q_i = \bigoplus_{j=0}^m P_j^{a_{i,j}}, \quad \text{for} \ i = 1,2.
\]
Set $\widetilde{M_i} = (F(Q_i))^G$ for $i = 1,2$ (here notation as in section \ref{c-f}). Clearly $\widetilde{M_i}\otimes_{S^G}R^G \cong M_i$.

(2) This follows from \ref{mod-hom}.

(3) If $\widetilde{\phi}$ is a split epi then it will follow that $\phi$ is a split epi, a contradiction.

(4) Recall $S^G/\pi S^G \cong A^G$, see \ref{mod-reg}. As $\pi$ is $S^G$-regular the result follows.

(5) Suppose if possible $\phi^*$ is a split epi. Consider the following exact sequence:
\[
0 \rt K \rt \widetilde{M_1} \xrightarrow{\widetilde{\phi}} \widetilde{M_2} \rt C \rt 0.
\]
As $\phi^*$ is epi we get that $C/\pi C = 0$. By Nakayama Lemma we get that $C = 0$.
Thus we have an exact sequence
\begin{equation}\label{in}
0 \rt K \rt \widetilde{M_1} \xrightarrow{\widetilde{\phi}} \widetilde{M_2} \rt 0.
\end{equation}
It follows that $K$ is a maximal \CM \ $S^G$-module.
Set $K^* = K/\pi K$. As $\phi^*$ is a split epi we get that $K^*\oplus M_2^* \cong M_1^*$. Set $\ov{Q_i} = k\otimes_VQ_i$ (here $Q_i$ is as in (1)).  By \cite[1.4 and 2.2]{Aus-1} there exists a projective $k[G]$-module $D^*$ with $K^* = (\ov{F}(D^*))^G$. Note we also have
$\ov{Q_2} \cong \ov{Q_1}\oplus D^*$. Let $D$ be the $V[G]$-projective cover of $D^*$.
Then $Q_2 \cong Q_1 \oplus D$.
 
Apply $-\otimes_{S^G}R^G$ to equation \ref{in}. We get
\begin{equation}\label{in-2}
0 \rt \ov{K} \rt M_1 \xrightarrow{\phi} M_2 \rt 0.
\end{equation}
As $\ov{K}$ is a maximal \CM \ $R^G$-module we have that $\ov{K} \cong \psi(W)$ for some projective $V[G]$-module $W$. We \\
\textit{Claim:} $W \cong D$.\\
Assume the claim for the moment. Note 
\[
M_1 \cong \psi(Q_1) = \psi(Q_2 \oplus D) \cong \psi(Q_2)\oplus \psi(D) \cong M_2 \oplus \ov{K}.
\] 
Thus the exact sequence \ref{in-2} is apparently split and so by Miyata's theorem (\cite{Miyata}, also see\cite[A3.29(a)]{eis}), we get that $\ref{in-2}$ is split. So $\phi$ is a split epi. This is a contradiction.

It remains to prove the claim. Let $W^* = k\otimes_V W$. In the ring $A^G/(\ov{f})$ we have that the modules $(\ov{F}(D^*))^G/\ov{f}(\ov{F}(D^*))^G$ and  $(\ov{F}(W^*))^G/\ov{f}(\ov{F}(W^*))^G$ are isomorphic. As $l \geq l_0(G)$ it follows that 
$(\ov{F}(D^*))^G \cong (\ov{F}(W^*))^G$. By  \cite[1.4 and 2.2]{Aus-1}  we have that $D^* \cong W^*$. As $D$ is the projective cover of $D^*$ and $W$ is a projective cover of $W^*$ we get that $D \cong W$.
\end{proof}

 \section{AR sequences}
 In this section we construct AR-sequences for $R^G$. We assume $f \in (x_1,x_2)^l\cap S^G$ with $l \geq l_0(G)$. Notice we are assuming that $G$ has no pseudo-reflections. 

\s Set $E = Vx_1\oplus Vx_2$ the free $V$-module with basis $x_1,x_2$. We write the Koszul complex of $S$ \wrt \ $(x_1,x_2)$ as
\begin{equation}\label{koszul}
0 \rt S\otimes_V\wedge^2 E \rt S\otimes_V E \rt S \rt V \rt 0.
\end{equation}
It is easy to see that \ref{koszul} is also an exact sequence of $S*G$-modules.
Let $P_0,P_1,\cdots,P_m$ be the complete list of indecomposable  projective $V[G]$-modules with $P_0 = V$. Applying the functor $-\otimes_V P_i$ to \ref{koszul} we obtain
\begin{equation}\label{koszul-P}
0 \rt S\otimes_V\left(\wedge^2 E\otimes_V P_i\right))  \rt S\otimes_V( E \otimes_V P_i) \xrightarrow{\widehat{p_i}} S\otimes_V P_i \xrightarrow{\epsilon_i} P_i \rt 0,
\end{equation}
which gives a projective resolution of the $S*G$-module $P_i$ as
by the following result we get that $\wedge^2 E\otimes_V P_i$ and $E \otimes_V P_i$ are projective $V[G]$-modules.
\begin{proposition}\label{ARS}
Let $X$ be a projective $V[G]$-module and let $Y$ be a $V[G]$-module which is free as a $V$-module. Then $X\otimes_V Y$ is a projective $V[G]$-module.
\end{proposition}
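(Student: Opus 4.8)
The plan is to reduce to the case $X=V[G]$ and then to identify $V[G]\otimes_V Y$ with a free $V[G]$-module by means of a standard ``untwisting'' isomorphism. Throughout, $X\otimes_V Y$ is to be regarded as a $V[G]$-module via the diagonal action $\sigma(x\otimes y)=\sigma(x)\otimes\sigma(y)$, which is well defined because each $\sigma$ acts $V$-linearly on both factors.

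First I would observe that the operation $-\otimes_V Y$ (with diagonal $G$-action) respects $V[G]$-linear direct sums and retracts: if $u\colon M\rt N$ and $v\colon N\rt M$ are $V[G]$-linear with $vu=\operatorname{id}_M$, then $u\otimes\operatorname{id}_Y$ and $v\otimes\operatorname{id}_Y$ are again $V[G]$-linear and compose to the identity. Since $X$ is projective it is a $V[G]$-retract of some free module $V[G]^{\,n}$, so $X\otimes_V Y$ is a $V[G]$-retract of $V[G]^{\,n}\otimes_V Y\cong (V[G]\otimes_V Y)^{\,n}$. As a retract of a projective module is projective, it suffices to prove that $V[G]\otimes_V Y$ is projective — in fact free — over $V[G]$.

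The heart of the argument is the map
\[
\phi\colon V[G]\otimes_V Y\longrightarrow V[G]\otimes_V Y_0,\qquad \phi\left(\sum_{\sigma\in G}\sigma\otimes y_\sigma\right)=\sum_{\sigma\in G}\sigma\otimes\sigma^{-1}(y_\sigma),
\]
where $Y_0$ denotes the $V$-module underlying $Y$ equipped with the \emph{trivial} $G$-action, so that on $V[G]\otimes_V Y_0$ the group $G$ acts only on the first tensor factor. One checks that $\phi$ is a $V$-module isomorphism, with inverse sending $\sigma\otimes y$ to $\sigma\otimes\sigma(y)$, and that it is $G$-equivariant, hence a $V[G]$-module isomorphism. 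Since $Y$ is free as a $V$-module, $Y_0\cong V^{(I)}$ as $V$-modules for some index set $I$, and therefore $V[G]\otimes_V Y_0\cong V[G]\otimes_V V^{(I)}\cong V[G]^{(I)}$ as $V[G]$-modules. Pulling this back along $\phi$ exhibits $V[G]\otimes_V Y$ as a free $V[G]$-module, which finishes the proof.

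I do not expect any genuine obstacle here. The one computation worth carrying out carefully is the $G$-equivariance of $\phi$, namely that $\phi(\tau\cdot(\sigma\otimes y))=\tau\sigma\otimes\sigma^{-1}y=\tau\cdot\phi(\sigma\otimes y)$ for all $\tau,\sigma\in G$, together with the (routine but essential) point that the reduction in the second paragraph really produces a $V[G]$-module direct summand and not merely a $V$-module one. I would also remark in passing that this lemma does not actually use the hypothesis $p\nmid|G|$.
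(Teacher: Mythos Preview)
Your argument is correct and is precisely the standard ``untwisting'' proof that the paper defers to: the paper's proof consists solely of the remark that the argument of \cite[p.~82, Proposition~3.1]{ARS} generalizes, and that argument is exactly the reduction to $X=V[G]$ followed by the isomorphism $\sigma\otimes y\mapsto \sigma\otimes\sigma^{-1}y$ you wrote down. Your closing remark that $p\nmid|G|$ is not needed here is also accurate.
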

\begin{proof}
The proof in page 82 of Proposition 3.1 in \cite{ARS}  generalizes.
\end{proof}

Taking invariants in equation \ref{koszul-P} we obtain an exact sequence of $S^G$-modules which we denote as follows:
\begin{equation}\label{eqn-P-s}
0 \rt \tau(\widetilde{L_i}) \rt\widetilde{ E_i}\xrightarrow{ \widetilde{p_i}} \widetilde{L_i}  \rt P_i^G \rt 0. 
\end{equation}
Note that $P_i^G = V$ if $i = 0$ and $P_i^G = 0$ otherwise. This follows from the 
the following result.
\begin{proposition}
Let $X$ be a finitely generated $V[G]$ module. Then
\begin{enumerate}[\rm (1)]
\item
$X^G$ is a $V$-direct summand of $X$.
\item
If $X$ is a projective $V[G]$-module then $X^G$ is a projective submodule of $X$.
\item
If $X$ is an indecomposable projective $V[G]$-module then $X^G = X$ or $X^G = 0$.
\end{enumerate}
\end{proposition}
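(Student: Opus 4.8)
The plan is to reduce all three statements to a single structural fact: taking $G$-invariants is the same as multiplication by a central idempotent of the group ring. Set $e = \frac{1}{|G|}\sum_{\sigma \in G}\sigma \in V[G]$, which makes sense because $|G|$ is a unit in $V$. A direct computation shows $\sigma e = e\sigma = e$ for every $\sigma \in G$, and hence $e^2 = e$ and $e$ lies in the center of $V[G]$; this is exactly the algebraic content of the Reynolds operator $\rho$ introduced in Section~2.

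Next I would verify that $eX = X^G$ for every $V[G]$-module $X$, together with the observation that $e$ restricts to the identity on $X^G$. Indeed, for $x \in X$ one has $\sigma(ex) = (\sigma e)x = ex$, so $ex \in X^G$; conversely, if $x \in X^G$ then $ex = \frac{1}{|G|}\sum_{\sigma}\sigma(x) = x$. Since $e$ is a central idempotent, $X = eX \oplus (1-e)X$ is an internal direct-sum decomposition into $V[G]$-submodules, and the first summand is precisely $X^G$.

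With this in hand the three parts are immediate. Part~(1): $X^G = eX$ is a $V[G]$-direct summand of $X$, a fortiori a $V$-direct summand. Part~(2): if $X$ is projective over $V[G]$, then its direct summand $eX = X^G$ is again projective over $V[G]$, and it is a submodule by construction. Part~(3): if $X$ is indecomposable, the decomposition $X = eX \oplus (1-e)X$ forces one of the two summands to vanish; either $eX = 0$, i.e.\ $X^G = 0$, or $(1-e)X = 0$, i.e.\ $X = eX = X^G$.

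There is essentially no obstacle here; the only point requiring a word of care is the meaning of ``projective submodule'' in (2), but both possible readings --- projective as a $V[G]$-module or as a $V$-module --- follow at once from the fact that $X^G$ is a $V[G]$-direct summand of $X$.
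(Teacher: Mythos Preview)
Your proof is correct. Your approach and the paper's are essentially the same mathematics, both resting on the averaging element $e = \frac{1}{|G|}\sum_{\sigma}\sigma$, but you organize it more uniformly. The paper's sketch treats the three parts separately: (1) via the Reynolds operator (yielding only a $V$-splitting), (2) by adapting the usual Maschke argument to upgrade this to a $V[G]$-complement, and (3) as a consequence of (2). You instead observe at the outset that $e$ is a \emph{central} idempotent of $V[G]$, so that $X = eX \oplus (1-e)X$ is already a $V[G]$-module decomposition with $eX = X^G$; all three parts then drop out at once. This is a cleaner packaging of the same idea and avoids invoking Maschke's theorem as a separate step.
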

\begin{proof}(Sketch)
(1) Use the Reynolds operator to get the result.

(2)The usual proof of Maschke's theorem \cite[6.1]{Lam} can be adapted to prove this result.

(3) Easily follows form (2).
\end{proof}

\s Let $\theta = \pi - f$ where $f\in (x_1,x_2)^l\cap S^G$ and $f \notin \pi S$. We assume that $l \geq l_0(G)$. Note that as $\tau(\widetilde{L_i}), \widetilde{E_i} , \widetilde{L_i} \in \add_{S^G}(S)$ they are maximal \CM \ $S^G$-modules. So $\theta$ is  $\tau(\widetilde{L_i})\oplus \widetilde{E_i} \oplus \widetilde{L_i}$-regular. Set $L_i = \widetilde{L_i}/\theta\widetilde L_i, E_i = \widetilde{E_i}/\theta \widetilde{E_i}$ and $\tau(L_i) = \tau(\widetilde{L_i})/\theta \tau(\widetilde{L_i})$. Then $L_i, E_i$ and $\tau(L_i)$ are maximal \CM \ $R^G$-modules.  Also note the action of $\theta$ on $V$ is same as that of $\pi$. So $\theta$ is $V$-regular and $V/\theta V = k$. Thus we have the following exact sequences:
\begin{align}
\label{ar-0}0 &\rt \tau(L_0) \rt E_0 \xrightarrow{p_0} L_0 \rt k \rt 0 \ \text{and} \\
\label{ar-1}0 &\rt \tau(L_i) \rt E_i \xrightarrow{p_i} L_i  \rt 0 \ \text{for} \ i>0. 
\end{align}
Note that as $l \geq l_0(G)$ we have that $L_i = \psi(P_i)$ for $i = 0,1,\ldots,m$ are precisely the indecomposable maximal \CM \ $R^G$-modules. Also note that $L_0 = R^G$.
\begin{remark}(with notation as in \ref{eqn-P-s}) Set $(-)^* = (-)\otimes_V k  = (-)\otimes_{S^G} A^G$. Then note that we have the following exact sequences 
of $A^G$-modules:
\begin{align}
\label{ar-0-s}0 &\rt \tau(L_0^*) \rt E_0^* \xrightarrow{p_0^*} L_0^* \rt k \rt 0 \ \text{and} \\
\label{ar-1-s}0 &\rt \tau(L_i^*) \rt E_i^* \xrightarrow{p_i^*} L_i^*  \rt 0 \ \text{for} \ i > 0. 
\end{align}
Notice $L_i^*$ for $i = 0,1,\ldots,m$ are precisely the indecomposable maximal \CM \ $A^G$-modules. Also note that $L_0^* = A^G$. By a result due to Auslander, \cite[p.\ 516, 517]{Aus-1}, we have that if 
 $L^*$ is a maximal \CM \ $A^G$-module and if $\phi^* \colon L^* \rt L_i^*$ is an $A^G$-homomorphism which is not a split epimorphism then there exists an $A^G$ homomorphism $\lambda^* \colon L^* \rt E_i^*$ with $\phi^* = p_i^* \circ \lambda^*$.\\
\end{remark}

The following is the main result of this section.
\begin{theorem}\label{ar-main}(with hypotheses as above). For any $i (0 \leq i \leq m)$ the sequences \ref{ar-0} and \ref{ar-1} satisfy the following condition:\\
If $L$ is a maximal \CM \ $R^G$-module and if $\phi \colon L \rt L_i$ is an $R^G$-homomorphism which is not a split epimorphism then there exists an $R^G$ homomorphism $\lambda \colon L \rt E_i$ with $\phi = p_i \circ \lambda$.\\
In particular for $i \neq 0$ the sequence \ref{ar-1} is the AR sequence ending in $L_i$.
\end{theorem}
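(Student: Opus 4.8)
The plan is to reduce the lifting property over $R^G$ to the corresponding property over $A^G$, which Auslander has already established (quoted in the remark preceding the theorem), by passing through the modulo-$\pi$ functor. First I would set up the data: given a maximal \CM\ $R^G$-module $L$ and a non-split epimorphism $\phi\colon L \rt L_i$, I want to produce $\lambda\colon L\rt E_i$ with $\phi = p_i\circ\lambda$. (If $\phi$ is not surjective the statement is about epimorphisms, so we may assume $\phi$ is epic and non-split; if $\phi$ is split there is nothing to prove.) By Theorem \ref{ext-S}(1)--(3) I can lift the whole picture to $S^G$: write $L = \widetilde{L}\otimes_{S^G}R^G$ with $\widetilde{L}\in\add_{S^G}(S)$, lift $\phi$ to $\widetilde{\phi}\in\Hom_{S^G}(\widetilde{L},\widetilde{L_i})$ using Proposition \ref{mod-hom} (which identifies $\Hom_{R^G}$ of the reductions with the reduction of $\Hom_{S^G}$), and note $\widetilde{\phi}$ is not a split epi; then by Theorem \ref{ext-S}(4)--(5), reducing mod $\pi$ gives $\phi^*\colon L^*\rt L_i^*$, a non-split homomorphism of maximal \CM\ $A^G$-modules.

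Next I would invoke Auslander's result exactly as recalled in the remark: since $\phi^*\colon L^*\rt L_i^*$ is not a split epimorphism, there exists $\lambda^*\colon L^*\rt E_i^*$ with $\phi^* = p_i^*\circ\lambda^*$. Now the task is to lift $\lambda^*$ back up. I would apply Proposition \ref{mod-hom} again, this time with $\theta = \pi$ (so $B = A^G$): it gives $\Hom_{A^G}(L^*, E_i^*) \cong \Hom_{S^G}(\widetilde{L},\widetilde{E_i})/\pi\Hom_{S^G}(\widetilde{L},\widetilde{E_i})$, and since $\Hom_{S^G}(\widetilde{L},\widetilde{E_i})$ is maximal \CM\ over $S^G$ (Corollary \ref{consequence-ht1}), in particular finitely generated, $\lambda^*$ lifts to some $\widetilde{\lambda}\in\Hom_{S^G}(\widetilde{L},\widetilde{E_i})$. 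The equation $\phi^* = p_i^*\circ\lambda^*$ says that $\widetilde{\phi} - \widetilde{p_i}\circ\widetilde{\lambda}$ lies in $\pi\Hom_{S^G}(\widetilde{L},\widetilde{L_i})$; the delicate point is that I want the equation on the nose after reducing modulo $\theta = \pi - f$, not modulo $\pi$. So I would argue by successive approximation: having a lift agreeing mod $\pi$, correct it by an element of $\pi\Hom$ and iterate; since $\Hom_{S^G}(\widetilde{L},\widetilde{E_i})$ is $\pi$-adically complete (it is finitely generated over the complete ring $S^G$) and $\widetilde{p_i}$ is a fixed map, the corrections converge to $\widetilde{\lambda}$ with $\widetilde{\phi} = \widetilde{p_i}\circ\widetilde{\lambda}$ exactly — wait, this only forces equality modulo $\bigcap_n \pi^n = 0$, hence genuine equality. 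Then reduce modulo $\theta$: set $\lambda = \widetilde{\lambda}\otimes_{S^G}R^G\colon L\rt E_i$, and $\phi = p_i\circ\lambda$ follows by functoriality of $-\otimes_{S^G}R^G$ applied to $\widetilde{\phi} = \widetilde{p_i}\circ\widetilde{\lambda}$.

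Actually the cleanest route avoids the successive-approximation subtlety: once I have $\widetilde{\phi} \equiv \widetilde{p_i}\circ\widetilde{\lambda}_0 \pmod{\pi}$, I replace $\widetilde{\lambda}_0$ by the honest lift produced directly from $\lambda^*$ via the isomorphism of Proposition \ref{mod-hom}; but to conclude $\widetilde{\phi} = \widetilde{p_i}\circ\widetilde{\lambda}$ I must know $\widetilde{p_i}\circ(-)\colon \Hom_{S^G}(\widetilde{L},\widetilde{E_i})\rt\Hom_{S^G}(\widetilde{L},\widetilde{L_i})$ is compatible with the mod-$\pi$ identifications, which it is since all these $\Hom$'s are in $\add_{S^G}(S)$ and $\pi$ is a nonzerodivisor on each, so that $\Hom(\widetilde{L},-)/\pi = \Hom_{A^G}(L^*,-^*)$ functorially. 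Then $\widetilde{\phi} - \widetilde{p_i}\circ\widetilde{\lambda} \in \pi\Hom_{S^G}(\widetilde{L},\widetilde{L_i})$ and we only get congruence mod $\pi$; so the successive-approximation (equivalently, $\pi$-adic completeness plus separatedness) step really is needed, and that is the main technical obstacle. Everything else is bookkeeping.

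For the final sentence of the theorem: once the lifting property holds for every $L$ and the sequence \ref{ar-1} is not split (it is not, since $\widetilde{p_i}$ is not a split epi by construction, hence $p_i$ is not), the standard characterization of almost split sequences (e.g. \cite[2.11, 2.12]{Yoshino}) shows that for $i\neq 0$, where $L_i$ is a non-free indecomposable maximal \CM\ $R^G$-module, \ref{ar-1} is the AR sequence ending in $L_i$: the lifting property is precisely the defining property that every non-split-epi map to $L_i$ factors through $p_i$, and indecomposability of $L_i$ plus $R^G$ being an isolated singularity (it has finite \CM\ type by \ref{herzog}) guarantees the sequence is almost split.
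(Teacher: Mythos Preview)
Your first pass---lift $\phi$ to $\widetilde\phi$ over $S^G$, reduce mod $\pi$ to get $\phi^*$ over $A^G$, invoke Auslander to factor $\phi^*=p_i^*\circ\lambda^*$, and lift $\lambda^*$ back to $\widetilde\lambda_0$---is correct and is exactly what the paper packages as Lemma~\ref{c-l}: it yields $\phi = p_i\circ\lambda_0 + \pi\phi_1$ at the $R^G$ level (equivalently $\widetilde\phi\equiv\widetilde{p_i}\circ\widetilde\lambda_0\pmod\pi$ at the $S^G$ level). The gap is in your ``successive approximation'' step. To iterate you must lift the error term $\widetilde\psi_0$ (where $\widetilde\phi-\widetilde{p_i}\circ\widetilde\lambda_0=\pi\widetilde\psi_0$) through $p_i^*$ after reducing mod $\pi$; but Auslander's lifting only applies when $\psi_0^*$ is \emph{not} a split epimorphism, and nothing prevents it from being one. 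Concretely, take $L=L_i$ and $\phi=\mu_\pi$ (multiplication by $\pi$, certainly not a split epi). Then $\phi^*=0$, so $\widetilde\lambda_0=0$ works, giving $\widetilde\psi_0=\mathrm{id}_{\widetilde L_i}$; its reduction $\psi_0^*=\mathrm{id}_{L_i^*}$ is a split epi and does \emph{not} factor through $p_i^*$ (the AR sequence over $A^G$ is nonsplit). Your iteration halts immediately, and in fact the exact equation $\widetilde\phi=\widetilde{p_i}\circ\widetilde\lambda$ you aim for over $S^G$ need not hold: the obstruction lives in $\Ext^1_{S^G}(\widetilde L,\tau(\widetilde L_i))$, which has no reason to vanish.

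The paper fixes this by working at the $R^G$ level and splitting into cases. When $L_i$ is not a direct summand of $L$, \emph{every} map $L\to L_i$ is a non-split-epi, so the iteration goes through and $\lambda=\sum_j\pi^j\lambda_j$ converges---this is your argument, and it is fine in that case. When $L_i$ is a summand of $L$ (reducible to $L=L_i$), the paper uses a genuinely new idea you are missing: in $R^G$ one has $\pi=f$, and since $f\in(x_1,x_2)$ the multiplication $\mu_f$ on $S\otimes_VP_i$ kills the augmentation $\epsilon_i$ in the Koszul resolution \eqref{koszul-P}, hence lifts through $\widehat{p_i}$ by projectivity; taking invariants and reducing mod $\pi-f$ gives a factorization $\mu_\pi=p_i\circ\delta$ over $R^G$. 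Then $\phi=p_i\circ\lambda_0+\pi\phi_1=p_i\circ(\lambda_0+\delta\circ\phi_1)$ in one step, with no further iteration needed. This Koszul trick is the heart of Case~2 and is absent from your proposal.

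(A minor point: the theorem concerns all homomorphisms that are not split epimorphisms, not only surjective ones; your opening sentence misreads this, though it does not affect the main line of argument.)
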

We first prove the following consequence of Theorem \ref{ext-S}.
\begin{lemma}\label{c-l}
(with hypotheses as above) If $\phi\colon L \rt L_i$  is  not a split epimorphism then there exists $R^G$ homomorphisms  $\lambda_0 \colon L \rt E_i$ and $\phi_1 \colon L \rt L_i$ with $\phi = p_i \circ \lambda_0 + \pi \phi_1$. 
\end{lemma}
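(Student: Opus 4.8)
The plan is to lift the given homomorphism $\phi \colon L \rt L_i$ from $R^G$ up to $S^G$, apply Auslander's lifting property over $A^G$ recorded in the Remark before Theorem \ref{ar-main}, and then push the lift back down. First I would use Theorem \ref{ext-S} (which applies since a non-split epimorphism is in particular a non-split map, and if $\phi$ were not epi one reduces to the epi case or handles it directly by the same device): write $L = \psi(Q)$ and $L_i = \psi(P_i)$ for suitable projective $V[G]$-modules, set $\widetilde{L} = (F(Q))^G$ and $\widetilde{L_i} = \widetilde{L_i}$ as in \ref{eqn-P-s}, so that $\widetilde{L}\otimes_{S^G}R^G \cong L$ and $\widetilde{L_i}\otimes_{S^G}R^G \cong L_i$. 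By Proposition \ref{mod-hom} applied with $\theta = \pi - f$, the map $\phi$ lifts to $\widetilde{\phi} \in \Hom_{S^G}(\widetilde{L},\widetilde{L_i})$ with $\widetilde{\phi}\otimes_{S^G}R^G = \phi$; and by part (5) of Theorem \ref{ext-S} the reduction $\phi^* = \widetilde{\phi}\otimes_{S^G}A^G \colon L^* \rt L_i^*$ is not a split epimorphism (here I use that $\phi$ non-split epi forces $\phi^*$ non-split epi, which is exactly the content of \ref{ext-S}(5) once one checks $\phi$ is epi; the non-epi case is subsumed since then no splitting as epi is even possible and one argues with $\phi^*$ directly not being a split epi).

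Next I would invoke the Remark preceding Theorem \ref{ar-main}: since $\phi^* \colon L^* \rt L_i^*$ is not a split epimorphism, Auslander's result gives an $A^G$-homomorphism $\mu^* \colon L^* \rt E_i^*$ with $\phi^* = p_i^* \circ \mu^*$. Now I want to lift $\mu^*$ to $S^G$. By Proposition \ref{mod-hom} applied with $\theta = \pi$ (the case $g = 0$, $B = S^G/(\pi) \cong A^G$, which is allowed), we have
\[
\Hom_{A^G}(L^*, E_i^*) \cong \frac{\Hom_{S^G}(\widetilde{L}, \widetilde{E_i})}{\pi\,\Hom_{S^G}(\widetilde{L}, \widetilde{E_i})},
\]
so $\mu^*$ lifts to $\widetilde{\mu} \in \Hom_{S^G}(\widetilde{L},\widetilde{E_i})$ whose reduction mod $\pi$ is $\mu^*$. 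Setting $\lambda_0 = \widetilde{\mu}\otimes_{S^G}R^G \colon L \rt E_i$, the composite $p_i \circ \lambda_0 \colon L \rt L_i$ agrees with $\phi$ after reduction mod $\pi$ (i.e. after $\otimes_{R^G}A^G$, equivalently $\otimes_V k$), because both reduce to $\phi^* = p_i^* \circ \mu^*$. Therefore $\phi - p_i\circ\lambda_0$ lies in $\pi\,\Hom_{R^G}(L, L_i)$, say $\phi - p_i\circ\lambda_0 = \pi\phi_1$ for some $\phi_1 \in \Hom_{R^G}(L,L_i)$, which is the desired conclusion.

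The main obstacle I anticipate is the bookkeeping needed to make the two applications of Proposition \ref{mod-hom} compatible: in one I reduce modulo $\theta = \pi - f$ (to recognize $L, L_i$ as coming from $S^G$), and in the other modulo $\pi$ (to lift $\mu^*$), and I must be careful that the module $E_i$ over $R^G$ and $E_i^*$ over $A^G$ both arise from the \emph{same} $\widetilde{E_i} \in \add_{S^G}(S)$ of \ref{eqn-P-s} — this is guaranteed because $\widetilde{E_i}$ is defined at the level of $S^G$ and its reductions mod $\pi-f$ and mod $\pi$ are exactly $E_i$ and $E_i^*$ by the discussion following \ref{eqn-P-s}. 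A secondary point to get right is confirming that "$\phi$ not a split epimorphism over $R^G$" really does yield "$\phi^*$ not a split epimorphism over $A^G$"; this is \ref{ext-S}(5), but that statement is phrased for epimorphisms, so if $\phi$ is not surjective one should note that in that case $\phi$ trivially cannot be a split epi and neither can $\phi^*$ fail the hypothesis of Auslander's lifting result in a problematic way — alternatively one factors out the image and reduces to the surjective case, which is the route I would take.
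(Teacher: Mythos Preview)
Your proposal is correct and follows essentially the same route as the paper: lift $\phi$ to $\widetilde{\phi}$ over $S^G$ via Proposition~\ref{mod-hom}, reduce mod $\pi$ to get $\phi^*$ not a split epi over $A^G$, apply Auslander's lifting to obtain $\mu^*$ with $\phi^* = p_i^*\circ\mu^*$, and lift $\mu^*$ back to $S^G$ using Proposition~\ref{mod-hom} with $\theta=\pi$. The only cosmetic difference is that the paper finishes the argument at the $S^G$ level (writing $\widetilde{\phi} = \widetilde{p_i}\circ\widetilde{\lambda_0} + \pi\widetilde{\phi_1}$ there and then reducing mod $\pi-f$), whereas you descend to $R^G$ first and then divide by $\pi$; both work.

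One small point deserves a cleaner treatment than you give it. Theorem~\ref{ext-S} is literally stated for $\phi$ a non-split \emph{epimorphism}, and you need its conclusion~(5) for $\phi$ merely not a split epimorphism. Your suggested workaround (``$\phi^*$ cannot fail the hypothesis in a problematic way'' or ``factor out the image'') is vague and not quite right: a priori $\phi^*$ could be a split epi even when $\phi$ is not surjective. The correct fix is simply to observe that the \emph{proof} of \ref{ext-S}(5) never uses surjectivity of $\phi$: assuming $\phi^*$ is a split epi, the argument shows (via Nakayama on $C$) that $\widetilde{\phi}$ is surjective and then derives that $\phi$ is a split epi, contradicting the hypothesis. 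So \ref{ext-S}(5) holds verbatim under the weaker assumption you need.
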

 \begin{proof}
 Let $\widetilde{L_i}, \widetilde{L}, \widetilde{\phi}, L_i^*, L^*$ and $\phi^*$ be as in Theorem \ref{ext-S}. So $\phi^* \colon L^* \rt L_i^*$ is not a split epimorphism. Therefore there exists an $A^G$-homomorphism $\lambda_0^* \colon L^* \rt E_i^*$ such that $\phi^* = p_i^* \circ \lambda_0^*$. By \ref{mod-hom} there exists an $S^G$ linear map $\widetilde{\lambda_0} \colon \widetilde{L} \rt \widetilde{E_i}$ with 
 $\widetilde{\lambda_0}\otimes A^G = \lambda_0^*$. Again from \ref{mod-hom} there exists an $S^G$ linear map $\widetilde{\phi_1} \colon \widetilde{L} \rt \widetilde{L_i}$ such that $\widetilde{\phi} = \widetilde{p_i}\circ \widetilde{\lambda_0} + \pi \widetilde{\phi_1}$. Going mod $\pi-f$ we get our result.
 \end{proof}
 We now give
 \begin{proof}[Proof of Theorem \ref{ar-main}]
 By Lemma \ref{c-l} there exists $R^G$ homomorphisms  $\lambda_0 \colon L \rt E_i$ and $\phi_1 \colon L \rt L_i$ with $\phi = p_i \circ \lambda_0 + \pi \phi_1$. 
 We consider the following two cases:
 
 \textit{Case 1: } $L_i$ is not a summand of $L$.
 Then note that $\phi_1$ is also not a split epi. So there exists  $\lambda_1 \colon L \rt E_i$ and $\phi_2 \colon L \rt L_i$ with $\phi_1 = p_i \circ \lambda_1 + \pi \phi_2$. Thus $\phi = p_i\circ(\lambda_0 + \pi \lambda_1) + \pi^2\phi_2$. Again $\phi_2$ is not a split epi. Iterating this process we get for all $n \geq 0$ homomorphisms $\lambda_n \colon L \rt E_i$ and $\phi_{n+1} \colon L \rt L_i$ such that $\phi_{n+1}$ is not a split epi and 
 \[
 \phi = p_i \circ \left( \sum_{j=0}^{n}\pi^j\lambda_j \right) +  \pi^{n+1} \phi_{n+1}.
\]   
Set 
$$\lambda =  \sum_{j =0}^{\infty}\pi^j\lambda_j. $$
Then as $R^G$ is complete and $\pi \in \n$ the maximal ideal of $R^G$ it follows that 
$\lambda \colon L \rt E_i$ is $R^G$-linear. Clearly $\phi = p_i \circ \lambda$. 

 \textit{Case 2: } $L_i$ is  a summand of $L$. Set $L = L_i^r \oplus K$ where $L_i$ is not a summand of $K$. 
 
 We decompose $\phi = (\phi_i, \phi_K)$. Note that $\phi_i$ and $\phi_K$ are not split epi's. By Case 1, $\phi_K$ already has a lift. Thus it suffices to consider the case when $L = L_i^r$. 
 
 If $L = L_i^r$ then note $\phi = (\phi_1,\phi_2,\cdots, \phi_r)$. Clearly $\phi_i$ are not split epi for each $i$. Thus it suffices to consider the case when $r = 1$. That is $L = L_i$. 
 By Lemma \ref{c-l} there exists $R^G$ homomorphisms  $\lambda_0 \colon L_i \rt E_i$ and $\phi_1 \colon L_i \rt L_i$ with $\phi = p_i \circ \lambda_0 + \pi \phi_1$. 
 
 \textit{Claim:}
  the multiplication map $\mu_\pi \colon L_i \rt L_i$ can be factored as $p_i \circ \delta$ where $\delta \colon L_i \rt E_i$.
 
 Assume the claim for the moment. Then we have 
 \begin{align*}
 \phi &= p_i \circ \lambda_0 + \pi \phi_1, \\
   &= p_i \circ \lambda_0 + p_i \circ \delta \circ \phi_1, \\
   &= p_i\circ \left(\lambda_0 + \delta \circ \phi_1 \right).
 \end{align*}
 Thus $\phi$ can be lifted.
 
We now prove our Claim. The essential point is that $\mu_\pi = \mu_f$ as $R^G = S^G/(\pi -f)$. 
 We consider the exact sequence \ref{koszul-P}. Notice as $f \in S^G$ we get that $f \in Z(S*G)$ the center of $S*G$. So the multiplication map  $\mu_f \colon S\otimes_V P_i \rt S\otimes_V P_i$ is $S*G$-linear. As $f \in (x_1,x_2)$ we get that $\epsilon_i \circ \mu_f = 0$. As $S\otimes_V P_i$ is a projective $S*G$-module we have a lift $\widehat{\delta} \colon S\otimes_V P_i \rt S\otimes_V(E\otimes_V P_i)$. Taking invariants and going mod $\pi -f$ yields the Claim.
 \end{proof}

A consequence of the above result is the following:
\begin{theorem}\label{AR-quiver}
(with hypotheses as in Theorem \ref{ar-main}) The AR quiver of $R^G$ coincides with the McKay graph $Mc(k^2, G)$.
\end{theorem}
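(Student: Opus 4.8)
The plan is to read off both the vertices and the arrows of the AR quiver of $R^G$ directly from the representation theory of $V[G]$, and then to transport this data to $k[G]$ via the reduction functor $k\otimes_V-$. For the vertices: by Theorem \ref{basic} the indecomposable maximal \CM\ $R^G$-modules are exactly $L_0=\psi(P_0)=R^G,\ L_1=\psi(P_1),\dots,L_m=\psi(P_m)$, and by \ref{perfect} the assignment $P_i\mapsto W_i:=k\otimes_V P_i$ is a bijection onto the set of isomorphism classes of irreducible $k[G]$-modules. Since this last set is, by definition, the vertex set of $Mc(k^2,G)$, we obtain a canonical bijection of vertex sets $L_i\leftrightarrow W_i$; in particular $L_0=R^G$ corresponds to the trivial representation $W_0=k$.

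For the arrows I would argue as follows. Fix $i$ with $0\le i\le m$. By the construction in \ref{eqn-P-s} and \ref{ar-1}, the middle term of the sequence attached to $L_i$ is $E_i=\widetilde{E_i}/\theta\widetilde{E_i}$ with $\widetilde{E_i}=\bigl(S\otimes_V(E\otimes_V P_i)\bigr)^G$, where $E=Vx_1\oplus Vx_2$; unravelling the definition of $\psi$ this reads $E_i=\psi(E\otimes_V P_i)$. Now $E\otimes_V P_i$ is a projective $V[G]$-module by \ref{ARS}, so we may write $E\otimes_V P_i\cong\bigoplus_{j=0}^m P_j^{\,m_{ij}}$ for unique integers $m_{ij}\ge 0$. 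Since $\psi$ is additive and, by Theorem \ref{basic} (1) and (2), sends indecomposables to pairwise non-isomorphic indecomposables, $E_i\cong\bigoplus_j L_j^{\,m_{ij}}$ is the decomposition of $E_i$ into indecomposables. For $i\ne 0$, Theorem \ref{ar-main} tells us that \ref{ar-1} is the AR sequence ending in $L_i$; since $R^G$ has algebraically closed residue field $k$, the number of arrows $L_j\to L_i$ in the AR quiver is then precisely $m_{ij}$, and the AR translate of $L_i$ is $\tau(L_i)=\psi(\wedge^2 E\otimes_V P_i)$, i.e.\ the $\psi$-image of the determinant twist of $P_i$.

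It remains to match $m_{ij}$ with the McKay data. Applying $k\otimes_V-$ and using \ref{perfect}, together with the identifications $k\otimes_V E=k^2$ (the defining representation of $G\subseteq GL_2(k)$) and $k\otimes_V(E\otimes_V P_i)=k^2\otimes_k W_i$, the decomposition above reduces to $k^2\otimes_k W_i\cong\bigoplus_j W_j^{\,m_{ij}}$; hence $m_{ij}$ equals the multiplicity of $W_j$ in $k^2\otimes_k W_i$, which is exactly the number of arrows $W_j\to W_i$ in $Mc(k^2,G)$. Likewise $\tau(L_i)$ corresponds under the vertex bijection to $\wedge^2(k^2)\otimes_k W_i$, the determinant twist of $W_i$, which is the AR translate built into $Mc(k^2,G)$. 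Thus for $i\ne 0$ the AR quiver and the McKay graph have the same arrows and the same translate.

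The one remaining point, and the step I expect to require the most care, is the trivial vertex $L_0=R^G$, for which there is no AR sequence. Here I would invoke the fundamental sequence \ref{ar-0}, namely $0\to\tau(L_0)\to E_0\xrightarrow{p_0}L_0\to k\to 0$ with $E_0=\psi(E)=\bigoplus_j L_j^{\,m_{0j}}$, exactly as Auslander does in \cite{Aus-1} (see also \cite{Yoshino}): the multiplicities $m_{0j}$ record the irreducible maps incident to $R^G$ in the AR quiver, and by the computation above $m_{0j}$ is the multiplicity of $W_j$ in $k^2$, i.e.\ the number of arrows at the trivial vertex of $Mc(k^2,G)$. Since Theorem \ref{ar-main} and Theorem \ref{basic} supply precisely the inputs that Auslander's bookkeeping requires, namely AR sequences with explicitly identified middle terms together with the indecomposability and rigidity of the $L_i$, his argument identifying the two quivers goes through verbatim, and the theorem follows.
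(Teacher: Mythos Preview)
Your proof is correct and follows essentially the same strategy as the paper: identify the indecomposable MCM $R^G$-modules with the indecomposable projectives over $V[G]$ via Theorem \ref{basic}, read off the arrows of the AR quiver from the decomposition of the middle terms $E_i=\psi(E\otimes_V P_i)$ using Theorem \ref{ar-main}, and then reduce modulo $\pi$ via \ref{perfect} to match these multiplicities with those defining $Mc(k^2,G)$. The paper packages the first two steps by introducing an intermediate quiver $T(V^2,G)$ and invoking \cite[10.14]{Yoshino}, whereas you unwind the argument explicitly (including the fundamental sequence at $L_0=R^G$); the content is the same.
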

\begin{proof}
Let $P_0, P_1, \cdots, P_m$ be the complete list  of non-isomorphic indecomposable projective $V[G]$-modules with $P_0 = V$. If $Y$ is a projective $V[G]$ module then $Y = P_0^{a_0}\oplus P_1^{a_1}\oplus \cdots \oplus P_m^{a_m}$ for some $a_i \geq 0$. Set $\mult_i(Y) = a_i$.

By Proposition \ref{ARS} we get that $V^2\otimes_V P_j$ is a projective $V[G]$-module. Let $T(V^2,G)$ be the oriented  graph whose vertices are $P_0,\ldots,P_m$ and  there are $c_{i,j}$ arrows from $P_i$ to $P_j$ if $\mult_i(V^2\otimes_V P_j) = c_{i,j} \neq 0$.

The proof in \cite[10.14]{Yoshino} generalizes to our situation (we have to use Theorem \ref{ar-main} also) and so the AR-quiver of $R^G$ is isomorphic to $T(V^2,G)$.

We now note that $W_i = \ov{P_i} = P_i/\pi P_i$ is a complete list of non-isomorphic irreducible representations of $G$ over $k$, see \ref{perfect}. If $X$ is a representation of $G$ over $k$ then $X = W_0^{a_0}\oplus W_1^{a_1}\oplus \cdots \oplus W_m^{a_m}$ for some $a_i \geq 0$. Set $\ov{\mult_i}(X) = a_i$. Recall that the McKay graph $Mc(k^2,G)$ is defined to be an oriented graph whose vertices are  $W_0,\cdots,W_m$ and there are 
$c_{i,j}$ arrows from $W_i$ to $W_j$ if $\ov{\mult_i}(k^2\otimes_V W_j) = c_{i,j} \neq 0$.

In general if $Y$ is a projective $V[G]$ then $Y$ is a projective cover of $Y/\pi Y$.
Also clearly $\mult_i Y = \ov{\mult_i}(Y/\pi Y)$. Note that 
\[
\left(V^2\otimes_V P_j \right)\otimes_V V/(\pi) \cong k^2\otimes_k W_j.
\]
Thus there are $\mu$ arrows from $P_i$ to $P_j$ in $T(V^2,G)$ if and only if there are $\mu$ arrows from $W_i$ to $W_j$ in $Mc(k^2,G)$. Thus $T(V^2,G) \cong Mc(k^2,G)$.
Our result follows
\end{proof}

\section{an Explicit example- The invarinats of Klein group $A_n$}
In this section we give an explicit example, the invariants of Klein group $A_n$ to illustrate our results. For simplicity we assume that the characteristic $p$ of the residue field $k$ is not equal to $2$.  Throughout $S = V[[x_1,x_2]]$ and $A = k[[x_1,x_2]]$. 

Throughout $\zeta_n$ will denote a primitive $n^{th}$ root of unity in $k$ when $n$
is coprime to $p$. By $\theta_n$ we mean an element of $V$ with $\ov{\theta_n} = \zeta_n$ and $\theta^n_n = 1$. By Hensel's lemma such a $\theta_n$ does exist. It can be  easily seen, for instance from \ref{inject-k},  that $\theta_n$ is unique.

(I) $(A_n)$  Cyclic group of order $n+1$. Here $n \geq 1$ and
\[
G = < \begin{pmatrix}
          \theta_{n+1}  & 0 \\
             0 & \theta_{n+1}^{-1}          
                \end{pmatrix} >
\]
Note we are assuming $p \nmid n+1$ and $p \neq 2$. 
We first prove
\begin{theorem}\label{KA}
(with hypotheses as above)
$$ S^G \cong V[[x,y,z]]/(x^2 + y^{n+1}+ z^2).$$
Furthermore if $f \in (x,y,z)^{3(n+1)}$ and $f \notin \pi S^G$ then $\ov{f} \in (u_1^2, u_2^2)$ for some efficient system of parameters $u_1, u_2$ of $A^G$. Thus $R^G = S^G/(\pi -f)$ will have AR quiver isomorphic to the McKay graph of $G$. 
\end{theorem}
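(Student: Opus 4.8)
The plan is to establish the theorem in two parts, first computing the invariant ring $S^G$ explicitly, and then verifying the divisibility condition on $f$ so that Theorem \ref{ar-main} applies. For the computation of $S^G$, I would work over $A = k[[x_1,x_2]]$ first (classical Klein $A_n$ case) and then lift to $V$. The cyclic group $G$ of order $n+1$ generated by $\operatorname{diag}(\theta_{n+1},\theta_{n+1}^{-1})$ acts on $S = V[[x_1,x_2]]$ by $x_1 \mapsto \theta_{n+1}x_1$, $x_2 \mapsto \theta_{n+1}^{-1}x_2$. A monomial $x_1^a x_2^b$ is fixed iff $a \equiv b \pmod{n+1}$, so $S^G$ is the $V$-subalgebra topologically generated by $a = x_1 x_2$, $b = x_1^{n+1}$, $c = x_2^{n+1}$. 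These satisfy the single relation $a^{n+1} = bc$. After the standard change of coordinates (using $p \neq 2$, so $2$ is invertible) one rewrites $bc - a^{n+1}$ in the form $x^2 + y^{n+1} + z^2$ by setting $x = (b+c)/2$, $z = (b-c)/(2\sqrt{-1})$ (or an analogous substitution, using that $k$ hence $V$ contains the needed square roots after Hensel lifting), and $y = -a$ up to a unit; one must check that the natural surjection $V[[x,y,z]] \to S^G$ has kernel exactly $(x^2+y^{n+1}+z^2)$, which follows by a dimension/normality count since both sides are two-dimensional normal domains and the hypersurface on the left is already normal.

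For the second part, I would invoke \ref{efficient sop}: it suffices to produce \emph{one} efficient system of parameters $u_1, u_2$ of $A^G = k[[x_1,x_2]]^G$ and show $(x_1,x_2)^{3(n+1)} \cap A^G \subseteq (u_1^2, u_2^2)$, which gives $l_0(G) \leq 3(n+1)$, and then the hypothesis $f \in (x_1,x_2)^{3(n+1)}$, $f \notin \pi S$ puts us exactly in the situation of Theorem \ref{basic} and Theorem \ref{ar-main}. The natural choice is $u_1 = b = x_1^{n+1}$, $u_2 = c = x_2^{n+1}$ (these form a homogeneous system of parameters of $A^G$ and one checks they are an efficient system of parameters in the sense of \cite[6.14]{Yoshino}). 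Then $(u_1^2, u_2^2) = (x_1^{2(n+1)}, x_2^{2(n+1)})A^G$. The key elementary computation is: any $G$-invariant element of $(x_1,x_2)^{3(n+1)}$, written as a $V$- (or $k$-) combination of invariant monomials $x_1^a x_2^b$ with $a+b \geq 3(n+1)$ and $a \equiv b \pmod{n+1}$, lies in $(x_1^{2(n+1)}, x_2^{2(n+1)})$ — because if $a + b \geq 3(n+1)$ then either $a \geq 2(n+1)$ or $b \geq 2(n+1)$ (if both were $< 2(n+1)$, their sum would be $< 4(n+1)$, which is not yet a contradiction, so one argues more carefully using $a \equiv b$: writing $a = b + t(n+1)$ one gets $2b + t(n+1) \geq 3(n+1)$, and in every case one coordinate reaches $2(n+1)$). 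This is the heart of the matter and the main potential obstacle: pinning down the sharp exponent so that $3(n+1)$ genuinely suffices for \emph{all} invariant monomials, which requires the congruence $a \equiv b \pmod{n+1}$ and not merely the degree bound.

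Finally, with $l_0(G) \leq 3(n+1)$ established, the conclusion that $R^G = S^G/(\pi - f)$ has AR-quiver isomorphic to the McKay graph $Mc(k^2, G)$ is immediate from Theorem \ref{AR-quiver}, since $G \subseteq GL_2(V)$ is cyclic and hence has no pseudo-reflections except the identity (the eigenvalues $\theta_{n+1}, \theta_{n+1}^{-1}$ of the generator, and of every non-identity power, are both different from $1$ as $p \nmid n+1$, so $\operatorname{rank}(\eta(g) - 1) = 2$ for $g \neq 1$). I expect the verification of the efficient-system-of-parameters property for $(x_1^{n+1}, x_2^{n+1})$ and the sharp monomial exponent bound to be the only steps needing genuine care; the structure-theoretic input is all already available from the earlier sections.
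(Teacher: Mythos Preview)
Your outline for computing $S^G$ follows the paper's route, with one slip: $S^G$ is three-dimensional (since $S = V[[x_1,x_2]]$ has $\dim S = 3$), not two. The paper shows $B = V[[x,y,z]]/(x^2+y^{n+1}+z^2)$ is a domain by observing that $B/\pi B \cong A^G$ is a domain and then killing any minimal prime of $B$ inside $(\pi)$ via Nakayama; the surjection $B \to S^G$ between three-dimensional domains is then an isomorphism.

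The genuine gap is in the second part. Your proposed inclusion
\[
(x_1,x_2)^{3(n+1)} \cap A^G \ \subseteq\ \bigl(x_1^{2(n+1)},\ x_2^{2(n+1)}\bigr)A^G
\]
is \emph{false}, and the very case you flagged (the diagonal $a=b$) is the one that breaks it. Take $m = \lceil 3(n+1)/2 \rceil$; then $(x_1x_2)^m$ is $G$-invariant and lies in $(x_1,x_2)^{3(n+1)}$, but since $m < 2(n+1)$ one checks directly in $A$ that $(x_1x_2)^m \notin (x_1^{2(n+1)}, x_2^{2(n+1)})A$, hence a fortiori not in the $A^G$-ideal. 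For $n=1$ this is just $(x_1x_2)^3 \in (x_1,x_2)^6$ but $(x_1x_2)^3 \notin (x_1^4,x_2^4)$.

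The paper sidesteps this by working in the hypersurface presentation $A^G \cong k[[x,y,z]]/(x^2+y^{n+1}+z^2)$ and taking the efficient system of parameters to be $(x,z)$, verified via the Noetherian different over the regular subrings $k[[x,y]]$ and $k[[y,z]]$ (here $p \neq 2$ is used). The defining relation gives $y^{n+1} = -(x^2+z^2) \in (x,z)$, so $(x,y,z)^{n+1} \subseteq (x,z)$ and therefore
\[
(x,y,z)^{3(n+1)} \subseteq (x,z)^3 \subseteq (x^2,z^2).
\]
This is also why the hypothesis in the theorem reads $f \in (x,y,z)^{3(n+1)}$ (a power of the maximal ideal of $S^G$), not $f \in (x_1,x_2)^{3(n+1)}$ as in your restatement. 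Note finally that although $(x_1^{n+1}, x_2^{n+1})$ and $(x,z)$ generate the \emph{same} ideal of $A^G$, the ideals generated by their squares differ: under the change of variables one has $x^2+z^2 = x_1^{n+1}x_2^{n+1} = (x_1x_2)^{n+1}$, so $(x^2,z^2)$ already contains the diagonal invariants that defeat your choice.
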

\begin{proof}We note that $x_1x_2, x_1^{n+1}, x_2^{n+1} \in S^G$. 

\textit{Claim-1:} $S^G =  V[[x_1x_2, x_1^{n+1}, x_2^{n+1}]]$.\\
Set $T =  V[[x_1x_2, x_1^{n+1}, x_2^{n+1}]] \subseteq S^G$.
Put $v_1 = x_1x_2, v_2 = x_1^{n+1}, v_3 =  x_2^{n+1}$ and $u_i = \ov{v_i}$ the image in $A$ for $i = 1,2,3$.  It is well-known that $A^G = k[[u_1,u_2,u_3]]$.

Let $x \in S$. Set $x = \sum_{i\geq 0} x_i$ where $x_i$ is homogeneous of degree $i$. Then $x \in S^G$ if and only if $x_i \in S^G$ for all $i \geq 0$.  We first show $x_i \in T$ for all $i$.

 For $\alpha = (\alpha_1,\alpha_2,\alpha_3) \in \mathbb{N}^3$ set $|\alpha| = 2\alpha_1 + (n+1)\alpha_2 + (n+1)\alpha_3$. Also set $u^\alpha = u_1^{\alpha_1}u_2^{\alpha_2}u_3^{\alpha_3}$. Define $v^\alpha$ analogously. Notice $\ov{x_i} \in A^G$.
\[
\ov{x_i} = \sum_{|\alpha| = i } \ov{a_{0,\alpha}}u^\alpha 
\]
for some $ \ov{a_{0,\alpha}} \in k$ for all $\alpha$ with $|\alpha| = i$. 
Set $$z_0 =  \sum_{|\alpha| = i } a_{0,\alpha}v^\alpha.$$
Then
$x_i = z_0 + \pi x_{i,1}$ for some $x_{i,1} \in S$. Notice $x_{i,1} \in S^G$. Also note that $x_{i,1}$ is homogeneous of degree $i$.
Iterating the above procedure we get
$x_{i,1} = z_1 + \pi x_{i,2}$ for some $x_{i,2} \in S^G$ and
$$z_1 =  \sum_{|\alpha| = i } a_{1,\alpha}v^\alpha.$$ Thus for all $m \geq 1$ we obtain relation 
 $x_{i,m} = z_m + \pi x_{i,m+1}$ for some $x_{i,m+1} \in S^G$ homogeneous of degree $i$ and
$$z_m =  \sum_{|\alpha| = i } a_{m,\alpha}v^\alpha.$$ 
We also have
$$x_i = z_0 + \pi z_1 + \pi^2 z_2 + \cdots + \pi^{m}z_m + \pi^{m+1}x_{i,m+1}.$$
Notice that for all $\alpha$ with  $|\alpha| = i$ we get that
$$ a_\alpha^{(i)} =  a_{0,\alpha} + \pi a_{1,\alpha} + \cdots +   \pi^m  a_{m,\alpha} + \cdots \in V$$
 So we get that
 \begin{align*}
 z^{(i)} &= z_0 + \pi z_1 + \cdots + \pi^m z_m + \cdots \\
    &= \sum_{|\alpha| = i } a_{\alpha}^{(i)}v^\alpha \in T
 \end{align*}
 Clearly $x_i = z^{(i)}$. Thus $x_i \in T$.

If $x = \sum_{i \geq 0}x_i \in S^G$ with $x_i$ homogeneous of degree $i$ then note that
\[
x = \sum_{i \geq 0}\left( z^{(i)}  \right) = \sum_{i \geq 0}\left( \sum_{|\alpha| = i } a_{\alpha}^{(i)}v^\alpha   \right) \in T
\]

\textit{Claim-2:} $S^G \cong V[[x,y,z]]/(x^2 + y^{n+1}+ z^2)$. \\
As $p \nmid 2(n+1)$ note that $\theta_4, \theta_{2(n+1)} \in V$. Define $\beta = \theta_{2(n+1)}v_1$. Also define $\alpha, \gamma$ by the formula $\alpha + \theta_4 \gamma  = v_2$ and $\alpha - \theta_4 \gamma = v_3$. Then notice $S^G = V[[\alpha, \beta, \gamma]]$. Furthermore as $v_1^{n+1} = v_2v_3$ we obtain $\alpha^2 + \beta^{n+1} + \gamma^2 = 0$.

Set $B = V[[x,y,z]]/(x^2 + y^{n+1}+ z^2)$. Note we have an obvious surjective map $B \rt S^G$. Note $B$ is \CM \ of dimension $3$. It suffices to show that $B$ is a domain. Notice
$B/\pi B \cong k[[x,y,z]]/(x^2 + y^{n+1}+ z^2) \cong A^G$ is a domain of dimension $2$. Thus $\pi B$ is a prime ideal of height one. Let $\q$ be a minimal prime of $B$ contained in $\pi B$. Let $a \in \q$. Then $a = \pi b$ for some $b \in B$. As $\pi \notin \q$ we get that $b \in q$. Thus $\q = \pi \q$. By Nakayama Lemma $\q = 0$. So $B$ is a domain.

Notice $A^G \cong  k[[x,y,z]]/(x^2 + y^{n+1}+ z^2)$. We now assert

\textit{Claim 3:} $(x,z)$ is an efficient system of parameters for $A^G$.\\
Consider the subring $T_1 = k[[x,y]]$ of $A^G$. As $A^G \cong T[Z]/(p(Z))$
 where $p(Z) = Z^2 + x^2 + y^{n+1}$ we get that dimension of $T_1$ is $2$. Thus $T_1$ is regular local. Let $\mathcal{N}_1$ be the Noetherian differnt of $A^G$ over $T_1$, see \cite[6.6]{Yoshino}. By \cite[6.13]{Yoshino}, $p'(z) \in \mathcal{N}_1$. Thus $2z \in \mathcal{N}_1$.
 
 Similarly by considering the regular subring $T_2 = k[[y,z]]$ of $A^G$ we get that
 $2x \in \mathcal{N}_2$, the Noetherian differnt of $A^G$ over $T_2$. Furthermore as $p \neq 2$ we get that $(x,z)$ is an efficient system of parameters of $A^G$.
 
 Finally note that $(x,y,z)^{n+1} \subseteq (x,z)$. Thus
 $(x,y,z)^{3(n+1)} \subseteq (x,z)^3 \subseteq (x^2,z^2)$.

\begin{remark}
In a similar but tedious way one can analyze the invariant rings of the Klein groups $D_n, E_6, E_7, E_8$.
\end{remark}

\end{proof}

\end{document}